 \newtheorem*{corollary*}{Corollary}
 \newtheorem*{construction*}{Construction}
 \newtheorem*{definition*}{Definition}
 \newtheorem*{notation*}{Notation}
 \newtheorem*{lemma*}{Lemma}
 \newtheorem*{theorem*}{Theorem}
 \newtheorem*{remark*}{Remark}
 \newtheorem*{example*}{Example}
 \newtheorem*{conjecture*}{Conjecture}
 \newtheorem*{condition*}{Condition}
 \newtheorem*{result*}{Result}
 \newtheorem*{property*}{Property}
 \newtheorem*{cor*}{Corollary}
 \newtheorem*{const*}{Construction}
 \newtheorem*{defn*}{Definition}
 \newtheorem*{notn*}{Notation}
 \newtheorem*{lem*}{Lemma}
 \newtheorem*{thm*}{Theorem}
 \newtheorem*{rem*}{Remark}
 \newtheorem*{exm*}{Example}
 \newtheorem*{conj*}{Conjecture}
 \newtheorem{lemma}{Lemma}[subsection]
 \newtheorem{remark}[lemma]{Remark}
 \newtheorem{thm}[lemma]{Theorem}
 \newtheorem{prop}[lemma]{Proposition}
 \newtheorem{lem}[lemma]{Lemma}
 \newtheorem{defn}[lemma]{Definition}
 \newtheorem{notn}[lemma]{Notation}
 \newtheorem{cor}[lemma]{Corollary}
 \newtheorem{introtheorem}{Theorem}
 \crefname{introtheorem}{theorem}{theorems}
 \Crefname{introtheorem}{Theorem}{Theorems}
  \newtheorem{introthm}[introtheorem]{Theorem}
   \crefname{introthm}{theorem}{theorems}
 \Crefname{introthm}{Theorem}{Theorems}
  \crefname{introcorollary}{corollary}{corollaries}
 \Crefname{introcorollary}{Corollary}{Corollaries}
  \newtheorem{introquest}[introtheorem]{Question}
 \crefname{introquest}{Question}{Questions}
 \Crefname{introquest}{Question}{Questions}
 \newtheorem{introcor}[introtheorem]{Corollary}
   \crefname{introcor}{corollary}{corollaries}
 \Crefname{introcor}{Corollary}{Corollaries}
   \crefname{introconjecture}{conjectures}{conjectures}
 \Crefname{introconjecture}{Conjecture}{Conjectures}
    \crefname{introconj}{conjectures}{conjectures}
 \Crefname{introconj}{Conjecture}{Conjectures}
     \crefname{introlem}{lemma}{lemmas}
 \Crefname{introlem}{Lemma}{Lemmas}
 \crefname{introremark}{remark}{remarks}
 \Crefname{introremark}{Remark}{Remarks}
  \crefname{introrem}{remark}{remarks}
 \Crefname{introrem}{Remark}{Remarks}
   \crefname{introprop}{Proposition}{Propositions}
 \Crefname{introprop}{Proposition}{Propositions}
   \crefname{introdefn}{definition}{definitions}
 \Crefname{introdefn}{Definition}{Definitions}
   \crefname{intronotn}{notation}{notations}
 \Crefname{intronotn}{Notation}{Notations}
   \crefname{introtask}{task}{tasks}
 \Crefname{introtask}{Task}{Tasks}
  \crefname{introprob}{problem}{problems}
 \Crefname{introprob}{Problem}{Problems}
   \crefname{introquestion}{question}{questions}
 \Crefname{introquestion}{Question}{Questions}
   \crefname{introexm}{example}{example}
 \Crefname{introquestion}{Example}{Example}
 \crefname{theorem}{theorem}{theorems}
 \Crefname{theorem}{Theorem}{Theorems}
  \crefname{thm}{theorem}{theorems}
 \Crefname{thm}{Theorem}{Theorems}
  \crefname{corollary}{Corollary}{Corollaries}
 \Crefname{corollary}{Corollary}{Corollaries}
   \crefname{cor}{Corollary}{Corollaries}
 \Crefname{cor}{Corollary}{Corollaries}
   \crefname{conjecture}{conjectures}{conjectures}
 \Crefname{conjecture}{Conjecture}{Conjectures}
    \crefname{conj}{conjectures}{conjectures}
 \Crefname{conj}{Conjecture}{Conjectures}
     \crefname{lem}{lemma}{lemmas}
 \Crefname{lem}{Lemma}{Lemmas}
      \crefname{lemma}{Lemma}{Lemmas}
 \Crefname{lemma}{Lemma}{Lemmas}
 \crefname{remark}{remark}{remarks}
 \Crefname{remark}{Remark}{Remarks}
  \crefname{rem}{remark}{remarks}
 \Crefname{rem}{Remark}{Remarks}
   \crefname{rem}{remark}{remarks}
 \Crefname{rem}{Remark}{Remarks}
   \crefname{proposition}{Proposition}{Proposition}
 \Crefname{proposition}{Proposition}{Proposition}
    \crefname{prop}{Proposition}{Propositions}
 \Crefname{prop}{Proposition}{Propositions}
   \crefname{defn}{definition}{definitions}
 \Crefname{defn}{Definition}{Definitions}
   \crefname{notn}{notation}{notations}
 \Crefname{notn}{Notation}{Notations}
   \crefname{task}{task}{tasks}
 \Crefname{task}{Task}{Tasks}
  \crefname{prob}{problem}{problems}
 \Crefname{prob}{Problem}{Problems}
   \crefname{question}{question}{questions}
 \Crefname{question}{Question}{Questions}
\newcommand{\alp}{\alpha}
\newcommand{\Id}{\operatorname{Id}}
\newcommand{\Ker}{\operatorname{Ker}}
\newcommand{\ad}{\operatorname{ad}}
\newcommand{\C}{\mathbb{C}}
\newcommand{\bN}{\mathbb{N}}
\newcommand{\bfG}{\mathbf{G}}
\newcommand{\bfH}{\mathbf{H}}
\newcommand{\bfY}{\mathbf{Y}}
\newcommand{\bfX}{\mathbf{X}}
\newcommand{\bfO}{\mathbf{O}}
\newcommand{\bfP}{\mathbf{P}}
\newcommand{\R}{\mathbb{R}}
\newcommand{\sdim}{\operatorname{sdim}}
\newcommand{\onto}{\twoheadrightarrow}
\providecommand{\fg}{\mathfrak{g}}
\providecommand{\fh}{\mathfrak{h}}
\providecommand{\cN}{\mathcal{N}}
\providecommand{\sub}{\subset}
\newcommand{\Dima}[1]{{{#1}}}
\newcommand{\DimaA}[1]{{{#1}}}
\newcommand{\DimaB}[1]{{{#1}}}
\begin{document}

\title{Symplectic complexity of reductive group actions}
\author{Avraham Aizenbud}
\address{Faculty of Mathematics and Computer Science, Weizmann
Institute of Science, POB 26, Rehovot 76100, Israel }
\email{aizenr@gmail.com}
\urladdr{http://www.aizenbud.org}

\author{Dmitry Gourevitch}
\email{dimagur@weizmann.ac.il}
 \urladdr{http://www.wisdom.weizmann.ac.il/~dimagur}

\keywords{Algebraic group, nilpotent orbit, homogeneous space, spherical space, spherical space, non-commutative harmonic analysis, isotropic subvariety.}
\subjclass[2020]{20G07, 14L30, 53D20, 53D12, 43A85 20G05,14L24}
%
%
%
%
%
%
%
%
\date{\today}

\maketitle

\begin{abstract}
Let a complex algebraic reductive group $\bf G$ act on a  complex algebraic manifold $\bf X$. For a $\bfG$-invariant  subvariety $\Xi$ of the nilpotent cone $\cN(\fg^*)\sub \fg^*$ we define a notion of $\Xi$-symplectic complexity of $\bfX$. This notion generalizes the notion of complexity defined in \cite{Vin}.
We prove several properties of this notion, and relate it to the notion of $\Xi$-complexity defined in \cite{AG} motivated by its relation with representation theory. 
\end{abstract}
%
%


\section{Introduction}

Let $\bf G$ be a complex algebraic reductive group and $\bf X$ be a complex algebraic  $\bf G$ -manifold. In \cite{Vin}, Vinberg defined a notion of complexity  of $\bf X$ that measures how far $\bf X$ is from being spherical\footnote{In fact, \cite{Vin} does not assume that $\bfX$ is smooth.}. For a $\bfG$-invariant  subvariety $\Xi$ of the nilpotent cone $\cN(\fg^*)\sub \fg^*$, we defined in \cite{AG} the a notion of $\Xi$-spherical varieties, and $\Xi$-complexity of $\bf X$, denoted $c_{\Xi}(\bfX)$, 
that measures how far $\bf X$ is from being $\Xi$-spherical. 

In this paper we define a variant of this notion that we call \emph{$\Xi$-symplectic complexity}, and denote $sc_{\Xi}(\bfX)$. This notion seems to be more coarse, but better behaved, and equally good for applications in representation theory. In particular, we prove the following theorem.

\begin{introthm}[\S \ref{sec:sc}]\label{thm:main}Let $\Xi\sub \cN(\fg^*)$ be a $\bfG$-invariant (locally closed) subvariety. 
\begin{enumerate}[(i)]
\item \label{it:geq} $sc_{\Xi}(\bfX)\geq \max(c_{\Xi}(\bfX),0)$
\item \label{it:max} Let $\{\bfY_i\}_{i=1}^k$ be smooth $\bfG$-invariant (locally closed) subvarieties of $\bfX$ such that $\bfX = \bigcup_i \bfY_i$, and $\{\Xi_i\}_{i=1}^l$ be smooth $\bfG$-invariant (locally closed) subvarieties of $\Xi$ such that $\Xi = \bigcup_j \Xi_j$.
 Then $sc_{\Xi}(\bfX)=\max_{i,j} sc_{\Xi_j}(\bfY_i)$.

\item \label{it:mod} For any parabolic subgroup $\bf P\sub G$, $sc_{\bf \overline{O_P}}(\bfX)$ equals the $\bfP$-modality\footnote{See \S \ref{subsec:mod} below.} of $\bfX$,  where $\bf O_P$ denotes the Richardson orbit defined by $\bfP$.
\item \label{it:N} $sc_{\cN(\fg^*)}(\bfX)=c_{\cN(\fg^*)}(\bfX)=c(\bfX)$.
\end{enumerate}
\end{introthm}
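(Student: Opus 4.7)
The plan is to treat the four parts in order of increasing difficulty: the formal assertions (ii) and (i) first, then the identifications (iv) and (iii). Throughout I expect the key tool to be the moment map $\mu_\bfX : T^*\bfX \to \fg^*$ and its interaction with the stratifications of $\bfX$ and $\Xi$.

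For part (ii), the symplectic complexity $sc_\Xi(\bfX)$ should by definition be controlled by generic behavior of $\mu_\bfX$ over $\Xi$. Given locally closed covers $\bfX = \bigcup \bfY_i$ and $\Xi = \bigcup \Xi_j$, I would argue by upper semicontinuity of fiber dimension that a generic datum realising the supremum defining $sc_\Xi(\bfX)$ lies in some $T^*\bfX|_{\bfY_i}$ with moment image hitting some $\Xi_j$; this yields $sc_\Xi(\bfX) \le \max_{i,j} sc_{\Xi_j}(\bfY_i)$. The reverse inequality is monotonicity of $sc$ under passage to $\bfG$-invariant locally closed subvarieties, which should be immediate from the definition. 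For part (i), the nonnegativity $sc_\Xi(\bfX) \ge 0$ ought to be visible directly from the definition (which measures a nonnegative symplectic defect), and the comparison $sc_\Xi(\bfX) \ge c_\Xi(\bfX)$ follows by lifting a $c_\Xi$-witness on $\bfX$ to its conormal variety in $T^*\bfX$ and noting that the class of configurations parameterising $sc_\Xi$ is strictly larger than the one parameterising $c_\Xi$.

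For part (iv), the case $\Xi = \cN(\fg^*)$ reduces to the classical symplectic characterization of Vinberg's complexity, namely $c(\bfX) = \dim \bfX - \tfrac12 \dim \mu_\bfX(T^*\bfX)$ (due to Knop and Panyushev), together with the fact, essentially proved in \cite{AG}, that $c_\cN(\bfX) = c(\bfX)$. Combining these with the definition of $sc$ and the observation that $\mu_\bfX(T^*\bfX) \subseteq \cN(\fg^*)$ in the relevant locus (handled via part (ii) if necessary) gives the chain $sc_\cN(\bfX) = c_\cN(\bfX) = c(\bfX)$. For part (iii), I would use the standard moment-map picture of the Richardson orbit, $\overline{\bfO_\bfP} = \overline{\bfG \cdot \fn_\bfP^\perp}$, where $\fn_\bfP^\perp \subseteq \fg^*$ annihilates the nilradical of $\Lie(\bfP)$. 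Via the identification
\[
T^*(\bfG \times^\bfP \bfX) \;\cong\; \bfG \times^\bfP \bigl( T^*\bfX \times \fn_\bfP^\perp \bigr),
\]
the preimage $\mu_\bfX^{-1}(\overline{\bfO_\bfP})$ can be analysed by exchanging $\bfG$-invariance for $\bfP$-invariance, converting $\bfG$-orbit families on $\mu_\bfX^{-1}(\overline{\bfO_\bfP})$ into $\bfP$-orbit families on $T^*\bfX$. Projecting these to $\bfX$ and noting that the extra cotangent directions contribute exactly the expected half-dimension then identifies $sc_{\overline{\bfO_\bfP}}(\bfX)$ with the $\bfP$-modality of $\bfX$.

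The hardest step will be part (iii), where the identification of $\bfG$-orbit families on $\mu_\bfX^{-1}(\overline{\bfO_\bfP})$ with $\bfP$-orbit families on $\bfX$ must be made precise: one needs to show it is a bijection on the parameter spaces of generic orbit families, rather than merely a map with fibers of controlled dimension. A secondary subtlety is the behavior of stabilizers along the Richardson orbit, which must be tracked carefully to match dimensions on both sides.
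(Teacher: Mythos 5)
There is a genuine gap: your proposal never engages with the actual definition of $sc_{\Xi}(\bfX)$, namely $\sdim({\bf T}_{\Xi})$ where ${\bf T}_{\Xi}=\kappa^{-1}(0)\cap\nu^{-1}(\Xi)\sub T^*({\bf X\times G/B})$ and $\sdim$ is the generic symplectic dimension (half of the dimension minus the kernel of the restricted $2$-form). Every part of the theorem rests on three properties of $\sdim$ that your argument does not supply: monotonicity under inclusion into a possibly singular ambient variety (Proposition \ref{prop:CG}, which requires Nash blow-ups and Whitney stratifications), invariance under pullback along dominant morphisms with compatible forms (Proposition \ref{prop:sympdom}), and the inequality $\sdim \bfY\geq\dim\bfY-\dim{\bf M}/2$ with equality exactly for coisotropic $\bfY$ (Lemma \ref{lem:simp}). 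Concretely: in (ii), ``upper semicontinuity of fiber dimension'' is not the relevant mechanism, and the ``immediate monotonicity'' you invoke is precisely the nontrivial step, since ${\bf T}_{\bfY_i,\Xi}$ lives in $T^*(\bfY_i\times{\bf G/B})$ while ${\bf T}_{\Xi}\cap (T^*\bfX|_{\bfY_i}\times T^*({\bf G/B}))$ lives in $T^*(\bfX\times {\bf G/B})$; one must compare them via the projection $T^*\bfX|_{\bfY_i}\to T^*\bfY_i$ and the pullback-invariance of $\sdim$. In (i), the inequality $sc_{\bfO}\geq c_{\bfO}$ is not a matter of ``a strictly larger class of configurations'': it is the exact dimension count $\dim\Gamma_{\bfO}=\dim(T^*\bfX\times\bfO)/2+c_{\bfO}(\bfX)$ for the graph-type variety $\Gamma_{\bfO}\sub T^*\bfX\times\bfO$, combined with Lemma \ref{lem:simp}. (Your remark that nonnegativity is built into the definition is correct.)

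Two further concrete problems. First, the formula you attribute to Knop--Panyushev, $c(\bfX)=\dim\bfX-\tfrac12\dim\mu_{\bfX}(T^*\bfX)$, is false as stated: it omits the rank term (for $\bfX={\bf G}$ it gives $\tfrac12\dim{\bf G}$ rather than $\dim{\bf G/B}$). The paper's route to (iv) is different and correct: $\kappa^{-1}(0)$ is a union of conormal bundles to orbits, hence coisotropic, so $\sdim\kappa^{-1}(0)=\dim\kappa^{-1}(0)-\dim\bfX-\dim{\bf G/B}=m_{\bf G}(\bfX\times{\bf G/B})=m_{\bf B}(\bfX)=c(\bfX)$. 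Second, in (iii) your $ {\bf G}\times^{\bf P}$ picture is the right way to see $m_{\bf G}(\bfX\times{\bf G/P})=m_{\bf P}(\bfX)$, but it provides no bridge from the definition of $sc_{\overline{\bf O_P}}$, which goes through the Springer map $T^*({\bf G/B})\to\cN(\fg^*)$, to the space $T^*({\bf X\times G/P})$. That bridge is the intermediate variety $\Gamma_{\bfO}\sub T^*\bfX\times\bfO$: both $T^*({\bf G/B})\to\overline{\bfO}$ and $T^*({\bf G/P})\to\overline{\bf O_P}$ dominate the relevant orbits, so by Proposition \ref{prop:sympdom} both ${\bf T}_{\bfO}$ and $\mu_{\bf P}^{-1}(0)\cap\nu_{\bf P}^{-1}(\bfO)$ have the same $\sdim$ as $\Gamma_{\bfO}$ (Lemmas \ref{lem:BGam} and \ref{lem:PGam}); without this, the ``bijection of orbit families'' you hope to make precise does not connect to $sc_{\overline{\bf O_P}}(\bfX)$ at all.
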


We are mainly interested in the case when $\bfG$ has finitely many orbits on $\bfX$. By Theorem \ref{thm:main}\eqref{it:max} the computation of $sc_{\Xi}(\bfX)$ for such $\bfX$ reduces to the case when both $\bf X$ and $\Xi$ are $\bfG$-transitive. In this case we give a simpler description for $sc_{\Xi}(\bfX)$ in Proposition \ref{prop:tran} below.

The equality $c_{\cN(\fg^*)}(\bfX)=c(\bfX)$ was proven in \cite{AG}. For this reason, the varieties with $c_{\Xi}(\bfX)\leq0$ are called $\Xi$-spherical. The study of this notion is motivated by \cite[Theorem D]{AG}, that implies that  for closed $\Xi$ and $\Xi$-spherical $\bfX$, the Casselman-Wallach representations of $\bfG(\C)$ from a certain category defined by $\Xi$ appear in  the space of Schwartz functions on $\bfX(\C)$ with  finite multiplicities\footnote{ If $\bfG$ and $\bfX$ are defined over $\R$ then the same holds for their real points.}.

Theorem \ref{thm:main}, together with \cite[Theorem B]{AG}, implies the following corollary. 

\begin{introcor}[\S \ref{sec:sc}]\label{cor:main}
For any parabolic subgroup $\bf P\sub G$, the following are equivalent.
\begin{enumerate}[(a)]
\item \label{it:sc0} $sc_{\bf \overline{O_P}}(\bfX)=0$.
\item \label{it:Ospher} $\bfX$ is ${\bf \overline{O_P}}$-spherical.
\item \label{it:Pfin} $\bfP$ has finitely many orbits on $\bfX$.
\end{enumerate}
\end{introcor}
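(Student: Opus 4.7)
The plan is to close a triangle of implications using the three ingredients at hand: parts \eqref{it:geq} and \eqref{it:mod} of \cref{thm:main} together with \cite[Theorem B]{AG}. Concretely, I establish $(a)\Leftrightarrow (c)$ and $(b)\Leftrightarrow (c)$, each as a short one-line deduction, so that no further work is required.

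For $(a)\Leftrightarrow (c)$, the plan is to invoke \cref{thm:main}\eqref{it:mod}, which identifies $sc_{\bf \overline{O_P}}(\bfX)$ with the $\bfP$-modality of $\bfX$. The $\bfP$-modality, as recalled in \S\ref{subsec:mod}, measures the maximum number of parameters in a continuous family of $\bfP$-orbits on $\bfX$, so it vanishes precisely when the $\bfP$-action on $\bfX$ has only finitely many orbits.

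For $(b)\Leftrightarrow (c)$, the plan is to cite \cite[Theorem B]{AG}, which characterizes ${\bf \overline{O_P}}$-sphericity of a smooth $\bfG$-variety precisely by the finiteness of its $\bfP$-orbits. As a consistency check that none of these pieces are mismatched, the implication $(a)\Rightarrow (b)$ can also be read off directly from \cref{thm:main}\eqref{it:geq}: if $sc_{\bf \overline{O_P}}(\bfX)=0$, then $c_{\bf \overline{O_P}}(\bfX)\leq 0$, which is by definition the ${\bf \overline{O_P}}$-sphericity of $\bfX$.

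The only conceptual friction, and thus the main step worth double-checking, is that the version of $\bfP$-modality appearing in the statement of \cref{thm:main}\eqref{it:mod} is exactly the classical Vinberg one, so that ``modality $=0$'' literally translates to ``finitely many $\bfP$-orbits''; once \S\ref{subsec:mod} is in hand this is automatic, and the corollary is essentially a packaging of \cref{thm:main} and \cite[Theorem B]{AG}.
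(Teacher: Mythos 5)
Your proposal is correct and follows essentially the same route as the paper: the equivalence $(a)\Leftrightarrow(c)$ via Theorem~\ref{thm:main}\eqref{it:mod} (modality vanishes iff there are finitely many $\bfP$-orbits) and $(b)\Leftrightarrow(c)$ via \cite[Theorem B]{AG}. The extra observation that $(a)\Rightarrow(b)$ also follows from Theorem~\ref{thm:main}\eqref{it:geq} is a valid consistency check but is not needed.
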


If $0\notin \Xi$ then $c_{\Xi}(\bfX)$ may be negative. However, we do not have an example in which $c_{\Xi}(\bfX)$ is non-negative but different from $sc(\bfX)$.  This leads to the following question.
\begin{introquest}$\,$
\begin{enumerate}[(i)]
\item Do we have $sc_{\Xi}(X)= \max(c_{\Xi}(\bfX),0)$? In particular, does $\Xi$-spherical imply $sc_{\Xi}(\bfX)=0$?

\item For a parabolic subgroup $\bf P\sub G$, does $c_{\bf \overline{O_P}}(\bfX)$ equal $sc_{\bf \overline{O_P}}(\bfX)$? 

\item \Dima{Let $\bf H\sub G$ be an algebraic subgroup, let $\fh$ denote the Lie algebra of $\bfH$ and $\fh^{\bot}\subset \fg^*$denote its annihilator. Let $\bfO\subset \cN(\fg^*)$ be a nilpotent orbit that intersects $\fh^{\bot}$. Do we have $sc_{\bfO}({\bf G/H})=sc_{\overline{\bfO}}({\bf G/H})$?}
\end{enumerate}
\end{introquest}

\Dima{
\subsection{Motivation}
While the notion of $\Xi$-complexity $c_{\Xi}(\bfX)$ is useful in representation theory, it seems not to be robust enough. For example, even its definition in the case when $\Xi$ is not a single orbit is indirect and has to take maximum over all orbits in $\Xi$. In addition, we do not know whether the analogs of Theorem \ref{thm:main}\eqref{it:max},\eqref{it:mod} hold for it. The notion of symplectic complexity solves these issues, keeping the relation with representation theory.

\subsection{Main idea}
Our notion of symplectic complexity is based a notion of symplectic dimension that we define in \S \ref{sec:sd}. The symplectic dimension can be used in order to measure how far a subvariety of a  symplectic variety is from being isotropic. We define the $\Xi$-symplectic complexity of $\bfX$ to be the symplectic dimension of a certain subvariety of $T^*({\bf X \times G/B})$ attached to $\Xi$, where $\bf G/B$ denotes the flag variety of $\bf G$. 
}
\subsection{Conventions}

All the algebraic varieties and groups that we consider are defined over $\C$. We will identify them with their complex points. By a subvariety of an algebraic variety we will always mean a locally closed subvariety.

\subsection{Acknowledgements}
We thank  Edward Bierstone, Dmitry Kerner, and Michael Temkin for a fruitful e-mail correspondence.

Both authors were partially supported by ISF grant 249/17. A.A. was also supported by a Minerva Foundation grant. 

\section{Symplectic dimension}\label{sec:sd}

\begin{defn}
For a complex vector space $W$ and an anti-symmetric bilinear form $\omega$ on $W$, we define the symplectic dimension of $W$ by 
$\sdim_{\omega}(W):=(\dim W - \Ker \omega)/2$. We will drop the subindex $\omega$ if the form is understood. 
\end{defn}

\begin{defn}\label{def:sd}
For an algebraic variety $\bf Y$ and a  2-form $\omega$ on $\bf Y$, we define the symplectic dimension $\sdim_{\omega}\bfY$ of $\bfY$ as the maximum of the symplectic dimension of the tangent spaces to irreducible components  at their generic points.
\Dima{By convention, we define the symplectic dimension of an empty variety to be zero.} 
\end{defn}
\begin{prop}[Appendix \ref{sec:CG}]\label{prop:CG}
For any  algebraic variety $\bf Z$, a 2-form $\omega$ on $\bf Z$, and a (locally closed)\ subvariety $\bf Y\sub Z$, we have $\sdim \bfY \leq \sdim {\bf Z}$.
\end{prop}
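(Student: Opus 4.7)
The plan is to reduce the statement to a pointwise linear-algebra claim, and handle singular points of $\bfZ$ via Hironaka's resolution of singularities.

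First, I would establish the linear-algebra version: for a vector space $W$ with an antisymmetric bilinear form $\omega$ and a subspace $V \sub W$, one has $\sdim_{\omega|_V}(V) \leq \sdim_\omega(W)$. Passing to $W/\ker\omega$ reduces to the case $\omega$ non-degenerate, where $\dim V + \dim V^\perp = \dim W$, and then $\dim V - \dim(V \cap V^\perp) \leq \dim V \leq \dim W$ gives the inequality. A variant of the same computation, using that $\ker(\phi^*\eta) = \phi^{-1}(\ker \eta)$ for a linear surjection $\phi$, shows that a surjective linear map pulls a 2-form back to a form with the \emph{same} symplectic dimension.

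When $\bfZ$ is smooth, the proof is immediate: for each irreducible component $\bfY_i$, at a generic smooth point $y \in \bfY_i$ we have $T_y\bfY_i \sub T_y\bfZ$, so the linear-algebra lemma gives $\sdim_{\omega|_\bfY}(T_y\bfY_i) \leq \sdim_\omega(T_y\bfZ)$; since $\bfZ$ is smooth, the rank of $\omega$ is lower semicontinuous on $\bfZ$, so the right-hand side is at most $\sdim_\omega \bfZ$.

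In the general case, let $\pi: \bfZ' \to \bfZ$ be a resolution obtained by desingularizing each irreducible component and taking the disjoint union, and set $\omega' := \pi^*\omega$ and $\bfY' := \pi^{-1}(\bfY)$ with reduced structure. I would then verify (a) $\sdim_{\omega'}\bfZ' = \sdim_\omega\bfZ$, since $\pi$ is an isomorphism at the generic point of each component of $\bfZ'$ and $d\pi$ transports the form; and (b) $\sdim_\omega \bfY \leq \sdim_{\omega'}\bfY'$, by finding, for each component $\bfY_i$, an irreducible component $\bfY'_k \sub \bfY'$ with $\pi(\bfY'_k) = \bfY_i$ (using properness and surjectivity of $\pi|_{\bfY'}$), then invoking generic smoothness in characteristic zero to obtain a dense open of $\bfY'_k$ on which $d\pi: T_{y'}\bfY'_k \to T_{\pi(y')}\bfY_i$ is surjective; on this open the surjective-pullback remark gives equality of symplectic dimensions. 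Combining (a), (b), and the smooth-$\bfZ$ case applied to $\bfY' \sub \bfZ'$ yields $\sdim \bfY \leq \sdim \bfY' \leq \sdim \bfZ' = \sdim \bfZ$.

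The main obstacle is exactly the passage through singular points of $\bfZ$: at a singular point $z$ the 2-form may have strictly larger symplectic dimension on the enlarged Zariski tangent space $T_z\bfZ$ than $\sdim_\omega \bfZ$, so a direct tangent-space comparison fails whenever a component of $\bfY$ lies inside the singular locus of $\bfZ$. The detour through the resolution and the $\sdim$-preserving pullback along surjective differentials is precisely what bypasses this issue.
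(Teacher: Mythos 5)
Your argument is correct, but it takes a genuinely different route from the paper's. The paper's proof works locally: it embeds $\mathbf{Z}$ as a closed subvariety of an affine space $M$, extends $\omega$ to $M$ via the conormal exact sequence (Lemma \ref{lem:ExtForm}), and then uses the Nash blow-up $\tau(\mathbf{Z},M)\subset Gr(TM)$. Its key input is Theorem \ref{thm:CG} --- at every smooth point $z$ of $\bfY$ there is a limit $L$ of tangent planes of $\mathbf{Z}$ with $T_z\bfY\subset L$ --- which is proved via Whitney stratifications (condition (a)); the proposition then follows from Lemma \ref{lem:Linsdim}, i.e.\ lower semicontinuity of $\sdim$ on $Gr(W)\times\Omega^2(W)$ together with monotonicity under inclusion. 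You instead transport the whole problem to a resolution of singularities $\mathbf{Z}'$, where the naive Zariski-tangent-space comparison plus lower semicontinuity of the rank of $\pi^*\omega$ on the smooth $\mathbf{Z}'$ suffices, and you recover $\sdim\bfY$ upstairs using generic smoothness of $\bfY'_k\to\bfY_i$ in characteristic zero together with the observation that a surjective linear map pulls a $2$-form back to one of the same rank. Both routes invoke a deep theorem (Whitney stratification versus Hironaka) to bypass exactly the same obstacle, namely components of $\bfY$ contained in the singular locus of $\mathbf{Z}$; yours avoids extending $\omega$ to an ambient space and the Grassmannian-bundle formalism, at the cost of being strictly a characteristic-zero argument, while the paper's yields the somewhat finer geometric statement about limiting tangent planes (Theorem \ref{thm:CG}), which is of independent use. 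The only steps worth writing out in full in your version are (i) that by properness and surjectivity of $\pi$ some irreducible component $\bfY'_k$ of $\pi^{-1}(\bfY)$ maps \emph{onto} each component $\bfY_i$, and (ii) that the generic point of $\bfY'_k$ lies in the locus where $\bfY'_k$ is smooth and $d\pi\colon T_{y'}\bfY'_k\to T_{\pi(y')}\bfY_i$ is surjective, so that the rank equality is applied at the point where $\sdim\bfY'_k$ is actually computed; both are as you indicate.
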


\begin{cor}\label{cor:max}
Let $\bf Z$ be an algebraic variety and $\omega$ be a 2-form on $\bf Z$. Let ${\bf Z}_1, {\bf Z}_2$ be (locally closed) subvarieties  of $\bf Z$ such that ${\bf Z}={\bf Z}_1 \cup {\bf Z}_2$. Then $\sdim {\bf Z}=\max(\sdim {\bf Z}_1,\sdim{\bf Z}_2)$.
\end{cor}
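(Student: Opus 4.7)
The inequality $\sdim \bfZ \geq \max(\sdim \bfZ_1,\sdim \bfZ_2)$ is immediate from \Cref{prop:CG} applied to the inclusions $\bfZ_i \hookrightarrow \bfZ$, so the content is the reverse inequality.

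My plan is to match, at generic points, each irreducible component of $\bfZ$ with an irreducible component of one of the $\bfZ_i$, so that the two tangent spaces carry the same restriction of $\omega$. Let $C$ be an irreducible component of $\bfZ$. Since $C=(C\cap \bfZ_1)\cup (C\cap \bfZ_2)$, both summands are locally closed in $\bfZ$ and $C$ is irreducible, one of them, say $C\cap \bfZ_1$, is dense in $C$. Pick a smooth point $c$ of $\bfZ$ lying in $C\cap \bfZ_1$ at which $c$ is the generic point of $C$ (such points form a dense open subset of $C$). Let $D$ be an irreducible component of $\bfZ_1$ containing $c$; then $\bar D$ (closure in $\bfZ$) is an irreducible closed subset of $\bfZ$ containing the generic point $c$ of $C$, and by maximality of $C$ we obtain $\bar D = C$.

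Next I would verify that, after shrinking to a suitable Zariski open neighbourhood $U$ of $c$ in $\bfZ$, the locally closed subvariety $D$ actually coincides with $C\cap U$. Indeed, since $D$ is open in $\bar D=C$ and $c\in D$, some neighborhood of $c$ in $C$ lies in $D$. This yields $T_cD = T_cC$ as subspaces of $T_c\bfZ$, hence $\sdim_\omega T_c D = \sdim_\omega T_c C$. Consequently the contribution of $C$ to $\sdim \bfZ$ is matched by the contribution of $D$ to $\sdim \bfZ_1$, so $\sdim \bfZ \leq \sdim\bfZ_1$ or $\sdim \bfZ\leq \sdim\bfZ_2$ depending on the choice of $i$.

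The only mildly delicate point is the bookkeeping of closures: one has to check that, for a locally closed irreducible $D\subset \bfZ$ whose closure is an irreducible component $C$ of $\bfZ$, $D$ contains a generic smooth point of $C$ and the tangent spaces agree there. Everything else is formal. Taking the maximum over all irreducible components $C$ of $\bfZ$ yields $\sdim \bfZ \leq \max(\sdim \bfZ_1, \sdim \bfZ_2)$, completing the proof. The edge case where some $\bfZ_i$ is empty is handled by the convention in \Cref{def:sd}.
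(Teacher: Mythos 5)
Your proof is correct and follows exactly the route the paper intends (the corollary is stated without proof as an immediate consequence of Proposition~\ref{prop:CG}): the inequality $\geq$ comes from Proposition~\ref{prop:CG}, and the inequality $\leq$ from matching each irreducible component $C$ of $\bfZ$ with a component $D$ of some $\bfZ_i$ that is open dense in $C$, so that the tangent spaces and the restricted form agree at a general point. The only cosmetic issue is the phrase ``at which $c$ is the generic point of $C$'' --- you mean a general (smooth) point of $C$ not lying on any other component --- but the argument is sound.
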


\begin{lemma}\label{lem:simp}
For a smooth symplectic algebraic variety $\bf M$, and a (locally closed) subvariety $\bf Y\sub M$, we have $\sdim \bfY \geq \dim \bfY - \dim {\bf M}/2$. Furthermore, equality holds if and only if $\bf Y$ is coisotropic.
\end{lemma}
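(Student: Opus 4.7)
The claim is pointwise in nature. By Definition \ref{def:sd} it suffices to fix an irreducible component of $\bfY$, pick a smooth point $y$ on it lying in the smooth locus of $\bfM$, and prove the corresponding linear-algebraic inequality for $V := T_y\bfY$ regarded as a subspace of the symplectic vector space $(W,\omega) := (T_y\bfM, \omega_y)$. Indeed, the symplectic dimension of $\bfY$ at $y$ is, by definition, $\sdim_{\omega|_V}(V) = (\dim V - \dim \Ker(\omega|_V))/2$, and $\Ker(\omega|_V) = V \cap V^{\perp}$, where $V^{\perp}$ denotes the symplectic complement of $V$ in $W$.

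The main step is then the standard identity $\dim V + \dim V^{\perp} = \dim W$, valid because $\omega$ is non-degenerate on $W$. Combining this with the trivial inclusion $V \cap V^{\perp} \subset V^{\perp}$ gives
\[
\dim(V \cap V^{\perp}) \;\leq\; \dim V^{\perp} \;=\; \dim W - \dim V.
\]
Substituting into the formula for $\sdim_{\omega|_V}(V)$ and rearranging yields
\[
\sdim_{\omega|_V}(V) \;=\; \frac{\dim V - \dim(V\cap V^{\perp})}{2} \;\geq\; \dim V - \frac{\dim W}{2},
\]
which, taking the maximum over components of $\bfY$ and using $\dim V = \dim \bfY$ at a generic smooth point, gives the desired bound $\sdim \bfY \geq \dim \bfY - \dim \bfM/2$.

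For the equality clause, equality in the displayed inequality amounts to $\dim(V\cap V^{\perp}) = \dim V^{\perp}$, i.e.\ $V^{\perp} \subset V$. This is exactly the infinitesimal coisotropicity of $\bfY$ at $y$. Since this equality must hold at a generic smooth point of every irreducible component to realize the maximum in Definition \ref{def:sd}, and conversely the coisotropic condition implies equality at all such points, we obtain that equality holds globally iff $\bfY$ is coisotropic.

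There is no substantive obstacle: the argument is pure linear algebra together with the passage from the variety to its tangent space at a generic smooth point, which is standard. The only small care needed is to ensure we work at a point where both $\bfY$ and $\bfM$ are smooth so that $T_y\bfY \subset T_y\bfM$ is well defined; this is automatic at the generic point of any irreducible component.
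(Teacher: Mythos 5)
Your argument is correct and follows essentially the same route as the paper: reduce to the linear-algebra statement for $V=T_y\bfY$ inside the symplectic space $W=T_y\bfM$ at a generic smooth point, use $\dim V^{\perp}=\dim W-\dim V$ and $\Ker(\omega|_V)=V\cap V^{\perp}\subset V^{\perp}$, and observe that equality forces $V^{\perp}\subset V$, i.e.\ coisotropicity. No issues.
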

\begin{proof}
It is enough to prove the corresponding statements for linear spaces $V\sub W$, and a non-degenerate anti-symmetric form $\omega$. Let $V^\angle$ denote the space orthogonal to $V$ with respect to $\omega$. Then $\dim V^\angle=\dim W-\dim V$, and $\Ker(\omega|_V)=V \cap V^\angle\sub V^\angle$, and the equality $\Ker(\omega|_V)=V^\angle$ holds if and only if $V$ is coisotropic, i.e. $V\supset V^\angle$.  Thus
\begin{multline*}
\sdim_{\omega|_V}(V)=(\dim V - \Ker \omega|_V)/2\geq (\dim V - \dim V^\angle)/2=\\(\dim V - (\dim W - \dim V))/2=\dim V - \dim W/2,
\end{multline*}
and equality holds if and only if $V$ is coisotropic.
\end{proof}

\Dima{
\begin{prop}\label{prop:sympdom}
Let $\phi:{\bf Z\to Y}$ be a  morphism of algebraic varieties. Let $\omega$ be a 2-form on $\bfY$ and $\phi^*\omega$ be its pullback to $\bf Z$. Then
\begin{enumerate}[(i)]
\item \label{it:mapleq} $\sdim_{\phi^*\omega}(\bf Z)\leq \sdim_{\omega}(Y).$
\item \label{it:mapeq} If $\varphi$ is dominant then $\sdim_{\phi^*\omega}(\bf Z)=\sdim_{\omega}(Y).$
\end{enumerate}
\end{prop}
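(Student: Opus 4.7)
The plan is to reduce both parts to a purely linear computation at a single generic tangent space of $\bfZ$, then patch components using Corollary~\ref{cor:max} and bound the target via Proposition~\ref{prop:CG}. The key linear input is the following elementary fact: for a linear map $L\colon V\to W$ of complex vector spaces and an antisymmetric form $\omega$ on $W$, one has
\[
\ker(L^*\omega)=L^{-1}\bigl(\Im(L)^{\perp_\omega}\bigr)=L^{-1}\bigl(\ker(\omega|_{\Im(L)})\bigr),
\]
so rank--nullity yields $\dim V-\dim\ker(L^*\omega)=\dim\Im(L)-\dim\ker(\omega|_{\Im(L)})$, i.e.\ $\sdim_{L^*\omega}(V)=\sdim_{\omega|_{\Im(L)}}(\Im(L))$. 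In particular this quantity depends only on $\Im(L)$, and equals $\sdim_\omega(W)$ whenever $L$ is surjective.

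For (i), by Corollary~\ref{cor:max} I may assume $\bfZ$ is irreducible. Let $\bfY':=\overline{\phi(\bfZ)}\subseteq \bfY$; it is closed and irreducible, so Proposition~\ref{prop:CG} gives $\sdim_\omega(\bfY')\le \sdim_\omega(\bfY)$. Since $\phi\colon \bfZ\to \bfY'$ is dominant between irreducible varieties in characteristic zero, generic smoothness furnishes a dense open on which $\bfZ$ and $\bfY'$ are smooth and $d\phi_z$ surjects onto $T_{\phi(z)}\bfY'$. Applying the linear lemma to $L=d\phi_z\colon T_z\bfZ\to T_{\phi(z)}\bfY$ at such a generic $z$ then gives
\[
\sdim_{\phi^*\omega}(T_z\bfZ)=\sdim_\omega(T_{\phi(z)}\bfY')=\sdim_\omega(\bfY')\le \sdim_\omega(\bfY),
\]
which establishes (i).

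For (ii), dominance of $\phi$ gives $\bfY=\bigcup_i \overline{\phi(\bfZ_i)}$ as the $\bfZ_i$ range over the irreducible components of $\bfZ$. Each $\overline{\phi(\bfZ_i)}$ is closed and irreducible, so every irreducible component $\bfY_j$ of $\bfY$ coincides with $\overline{\phi(\bfZ_i)}$ for some $i$. The argument of (i) applied to the dominant restriction $\phi|_{\bfZ_i}\colon \bfZ_i\to \bfY_j$ produces the sharp equality $\sdim_{\phi^*\omega}(\bfZ_i)=\sdim_\omega(\bfY_j)$, so $\sdim_{\phi^*\omega}(\bfZ)\ge \sdim_\omega(\bfY_j)$ for every $j$; taking the maximum over $j$ and combining with (i) yields the equality in (ii). The only substantive technical step is generic surjectivity of $d\phi_z$ onto $T_{\phi(z)}\bfY'$ for a dominant map between irreducible varieties, which is routine in characteristic zero; everything else is bookkeeping of irreducible components and the linear identity above.
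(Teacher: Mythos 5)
Your proof is correct, but it is organized quite differently from the paper's. The paper first settles the linear case for \emph{surjective} maps, upgrades it to smooth morphisms between smooth varieties, and then handles general morphisms by a Noetherian induction that strata-by-strata peels off the singular locus of $\phi$ (using Corollary \ref{cor:max} to reassemble), finally deducing (ii) from (i) by an openness/density argument. You instead prove a sharper linear identity, $\sdim_{L^*\omega}(V)=\sdim_{\omega|_{\operatorname{Im}L}}(\operatorname{Im}L)$ for an \emph{arbitrary} linear map $L$, which lets you treat a general morphism in one step: factor through $\overline{\phi(\mathbf{Z})}$, apply generic smoothness at a single general point, and then invoke Proposition \ref{prop:CG} to compare $\sdim\overline{\phi(\mathbf{Z})}$ with $\sdim\mathbf{Y}$. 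This replaces the Noetherian induction entirely, at the cost of leaning directly on Proposition \ref{prop:CG} (the Nash blow-up result from the appendix) rather than only on its Corollary \ref{cor:max}; since Corollary \ref{cor:max} already uses the full strength of that proposition, nothing is lost logically, and there is no circularity because Proposition \ref{prop:CG} is proved independently. Your route also makes (ii) transparent: the component bookkeeping identifying each irreducible component of $\mathbf{Y}$ with some $\overline{\phi(\mathbf{Z}_i)}$ is the same density observation the paper uses in its final step. The one point you should make explicit (which the paper also leaves implicit) is that the symplectic dimension at the generic point is computed by a sufficiently general closed point $z$, and that such $z$ can simultaneously be chosen so that $\phi(z)$ is general in $\overline{\phi(\mathbf{Z})}$; both follow from semicontinuity (Lemma \ref{lem:Linsdim}) and the fact that preimages of dense opens under a dominant map of irreducible varieties are dense.
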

\begin{proof}
Without loss of generality we assume that $\bf Z$ is smooth.
\begin{enumerate}[{Step} 1.]
\item Statement \eqref{it:mapeq} in the case $\bf Y$ and $\bf Z$ are linear spaces, and $\phi$ is a linear operator.\\
This is straightforward from the definitions.
\item \label{Step:smooth}Statement \eqref{it:mapeq} in the case $\bf Y$ is smooth and the map $\phi$ is smooth. \\
This step immediately follows from the previous one.
\item Statement \eqref{it:mapleq} in the case $\bf Y$ is smooth and the map $\phi$ is smooth. \\
This step immediately follows from the previous one. 
\item \label{Step:leq} Statement \eqref{it:mapleq} in the general case.\\
We prove by Noetherian induction. Let $\bfY_1\subset \bfY$ be the singular locus of $\phi$ (which includes the singular locus of  $\bfY$). Let $\bfY_2:=\bfY\setminus \bfY_1$ denote the complement. Denote their preimages by ${\bf Z}_1:=\phi^{-1}(\bfY_1)$ and ${\bf Z}_2:=\phi^{-1}(\bfY_2)$. By the induction hypothesis, $\sdim {\bf Z}_1\leq\sdim \bfY_1$. By the previous step, $\sdim {\bf Z}_2\leq\sdim \bfY_2$. By Corollary \ref{cor:max}, we have
$$\sdim {\bf Z}=\max(\sdim {\bf Z}_1, \sdim {\bf Z}_2)\leq\max(\sdim {\bf Y}_1,\sdim {\bf Y}_2)=\sdim {\bf Y}$$

\item Statement \eqref{it:mapeq} for general dominant $\phi$.\\ 
By Step \eqref{Step:leq}, it is enough to show that $\sdim_{\phi^*\omega}(\bf Z)\geq\sdim_{\omega}(Y).$

 Let $\bfY_1,\bfY_2,{\bf Z}_1,{\bf Z}_2$ be as in the previous step.   By  Step \eqref{Step:smooth}, $\sdim {\bf Z}_2=\sdim \bfY_2$. Since ${\bf Z}_2$ is open in $\bf Z$,  we have $\sdim {\bf Z}\geq \sdim {\bf Z}_2$. Since $\bfY_2$ is open and dense in $\bf Y$,  we have $\sdim {\bf Y} = \sdim{\bf Y}_2$. Summarizing we have
$$\sdim {\bf Z}\geq\sdim {\bf Z}_2=\sdim {\bf Y}_2=\sdim {\bf Y}$$
\end{enumerate}
\end{proof}
}
\section{Preliminaries on modality and complexity}\label{subsec:ModCom}
\subsection{Modality}\label{subsec:mod}
\DimaB{Let us recall the notions of modality and generic modality from \cite{Vin} and \cite[\S 6]{Tim}.}
Let $\bfH$ be an algebraic group and $\bfX$ be a smooth $\bf H$-variety. 
By Rosenlicht's theorem \cite{Ros}, a non-empty open subset $\bf U\sub X$  has a geometric quotient by $\bfH$. In particular, all the orbits in $\bf U$ have the same dimension. This theorem implies the following equality.
$$\mathrm{tr}.\deg \C(\bfX)^{\bfH}=\min_{x\in \bfX(\C)}\mathrm{codim}_{\bfX}\bfH x$$
This number is called the \emph{generic modality} of $\bfX$, and denoted $d_{\bfH}(\bfX)$. 

Let $lcl(\bfX)$ denote the set of all irreducible $\bfH$-stable locally closed subvarieties, and $cl(\bfX)\sub lcl(\bfX)$ denote the subset consisting of closed subvarieties. 
Rosenlicht's theorem  implies 
\begin{align*}
\max_{\bfY\in lcl(\bfX)}d_{\bfH}(\bfY)&=\max_{\bfY\in cl(\bfX)}d_{\bfH}(\bfY)\\
&=\dim \{ (g,x)\in {\bf H\times X}\, \vert \, gx=c \}-\dim \bfH\\
&=\dim \{ (x,\xi)\in T^*{\bf  X}\, \vert \, \forall \alp \in Lie(\bfH), \, \langle \alp x,\xi\rangle =0\}-\dim \bfX
\end{align*}

This number is called the \emph{modality} of $\bfX$. We will denote it by $m_{\bfH}(\bfX)$.

\subsection{Complexity}\label{subsec:com}
Let $\bfG$ be a reductive algebraic group and $\bfX$ be a  smooth $\bf G$-variety. 
Let $\bf B\sub G$ be a Borel subgroup of $\bfG$. By \cite[Theorem 2]{Vin} (cf. \cite{Bri}) we have
$$m_{\bf B}(\bfX)=d_{\bf B}(\bfX)$$
This number is called \emph{the complexity of $\bfX$} and denoted by $c(\bfX)$. 

 Let $\fg$ denote the Lie algebra of $\bf G$, $\fg^*$ denote the dual space, and $\cN(\fg^*)\sub \fg^*$ denote the nilpotent cone.
For any point $x\in \bfX$, let $a_x:\bfG\to \bfX$ denote the action map, and $da_x:\fg\to T_x\bfX$ denote its differential.
We will denote by $\mu_{\bf G,X}$ the moment map $T^*\bfX\to \fg^*$  defined by
$$\mu_{\bf G,X}(x,\xi)(\alp):= \xi(da_x(\alp))$$

For a $\bfG$-invariant (locally closed) subvariety $\Xi \sub \cN(\fg^*)$, we defined in \cite[\S 2.2]{AG} the $\Xi$-complexity $c_{\Xi}(\bfX)$ by 
$$c_{\Xi}(\bfX):=\max_{\text{orbit } \bfO\sub \Xi}\left(\dim \mu^{-1}(\bfO)-\dim \bfO\right/2)-\dim \bfX$$
and proved that $c_{\cN}(\bfX)=c(\bfX)$.

\section{Symplectic complexity and the proof of Theorem \ref{thm:main}}\label{sec:sc}
Let $\bf G$ be a reductive algebraic group  and $\bf X$ be an algebraic $\bf G$-manifold.
Let $\bf B\sub G$ be a Borel subgroup, and let $$\kappa:=\kappa_{\bfX}:=\mu_{\bf \Delta G,X\times (G/B)}:T^*({\bf X\times G/B})\to \fg^*$$ denote the moment map under the diagonal action of $\bf G$.

\begin{notn}
Let $\nu:=\nu_{\bfX}:T^*({\bf X\times G/B})\to \fg^*$ be the composition of the projection $T^*({\bf X\times G/B})\onto T^*({\bf G/B})$ with the Springer resolution $\mu_{\bf G,G/B}:T^*({\bf G/B})\onto \cN(\fg^*)\sub \fg^*$.
\end{notn}

\begin{defn}
For a $\bfG$-invariant (locally closed) subvariety $\Xi \sub \cN(\fg^*)$, we define 
$${\bf T}_{\Xi}:=\kappa^{-1}(0)\cap \nu^{-1}(\Xi)\sub T^*({\bf X\times G/B})$$ and define the  $\Xi$-symplectic  complexity of $\bfX$ by  
 $$sc_{\Xi}(X)= \sdim({\bf T}_{\Xi}).$$
\end{defn}

Let us record the following immediate corollary of  Corollary \ref{cor:max}.

\begin{cor}\label{cor:orbitwice}
For any $\bfG$-invariant (locally closed) subvariety $\Xi \sub \cN(\fg^*)$, we have $$sc_{\Xi}(X)=\max_{\text{orbit } \bf O \sub \Xi}sc_{\bf O}(\bfX).$$
\end{cor}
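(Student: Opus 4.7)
The plan is to reduce the statement directly to \Cref{cor:max}, once we observe that the union defining $\bfT_\Xi$ from orbit-level pieces is a \emph{finite} one.

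First I would recall that a reductive group $\bfG$ acts on its nilpotent cone $\cN(\fg^*)$ with only finitely many orbits (standard consequence of the Jacobson--Morozov theorem). Since $\Xi$ is a $\bfG$-invariant locally closed subvariety of $\cN(\fg^*)$, it is therefore a finite union $\Xi=\bigcup_{i=1}^n \bfO_i$ of nilpotent orbits. Each $\bfO_i$ is locally closed in $\cN(\fg^*)$, hence also locally closed in $\Xi$.

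Next I would transport this decomposition through $\nu$ and intersect with $\kappa^{-1}(0)$. Setting
$$\bfT_{\bfO_i}:=\kappa^{-1}(0)\cap \nu^{-1}(\bfO_i),$$
I get a decomposition
$$\bfT_\Xi=\kappa^{-1}(0)\cap \nu^{-1}(\Xi)=\bigcup_{i=1}^n \bfT_{\bfO_i},$$
with each $\bfT_{\bfO_i}$ locally closed in $\bfT_\Xi$ because $\bfO_i$ is locally closed in $\Xi$ and $\nu$ is a morphism.

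Finally, I would apply \Cref{cor:max} inductively on $n$ to this finite locally closed cover, obtaining
$$\sdim(\bfT_\Xi)=\max_{1\le i\le n}\sdim(\bfT_{\bfO_i}),$$
which is exactly $sc_\Xi(\bfX)=\max_{\text{orbit }\bfO\subset \Xi}sc_{\bfO}(\bfX)$. I do not foresee any real obstacle here; the only subtlety is ensuring finiteness of the decomposition so that \Cref{cor:max} applies, and this is guaranteed by finiteness of nilpotent orbits.
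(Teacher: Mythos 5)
Your proof is correct and matches the paper's intent: the paper simply records this as an ``immediate corollary'' of \Cref{cor:max}, and the argument it has in mind is exactly yours --- decompose $\Xi$ into its finitely many nilpotent orbits, note $\bfT_\Xi=\bigcup_i\bfT_{\bfO_i}$ with each piece locally closed, and apply \Cref{cor:max} finitely many times. Nothing further is needed.
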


\begin{prop}\label{prop:sd=c}
We have $sc_{\cN}(\bfX)=c(\bfX)$.
\end{prop}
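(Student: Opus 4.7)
The plan is to identify $sc_\cN(\bfX)$ with $\sdim\kappa^{-1}(0)$, compute its dimension via a fibre argument, and then reduce the symplectic complexity to a moment-map coisotropicity statement. First, since the Springer moment map $\mu_{\bfG,\bfG/\bfB}$ has image $\cN$, we have $\nu^{-1}(\cN)=T^*(\bfX\times\bfG/\bfB)$, and hence ${\bf T}_\cN=\kappa^{-1}(0)$.

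Next, I would decompose $\kappa^{-1}(0)=\bigsqcup_{\bfO\subset\cN}{\bf T}_\bfO$ over nilpotent orbits and project each piece ${\bf T}_\bfO$ to $\bfO$ via $\nu$. The fibre over $\eta\in\bfO$ equals $\mu_{\bfG,\bfX}^{-1}(-\eta)\times\mu_{\bfG,\bfG/\bfB}^{-1}(\eta)$; by $\bfG$-equivariance of $\mu_{\bfG,\bfX}$ and the fact that Springer fibres over a fixed nilpotent orbit are equidimensional, all such fibres have the same dimension $(\dim\mu_{\bfG,\bfX}^{-1}(\bfO)-\dim\bfO)+(\dim\bfG/\bfB-\dim\bfO/2)$. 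Summing with $\dim\bfO$,
$$\dim{\bf T}_\bfO\;=\;\dim\mu_{\bfG,\bfX}^{-1}(\bfO)+\dim\bfG/\bfB-\dim\bfO/2,$$
so taking the maximum over $\bfO$ and subtracting $\tfrac12\dim T^*(\bfX\times\bfG/\bfB)=\dim\bfX+\dim\bfG/\bfB$ yields exactly $c_\cN(\bfX)=c(\bfX)$.

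The main step is to show that $\kappa^{-1}(0)$ is coisotropic in $T^*(\bfX\times\bfG/\bfB)$, so that Lemma~\ref{lem:simp}, applied to each irreducible component and combined with Corollary~\ref{cor:max} extended to finite unions, yields $\sdim\kappa^{-1}(0)=\dim\kappa^{-1}(0)-\tfrac12\dim T^*(\bfX\times\bfG/\bfB)=c(\bfX)$. Since $\kappa$ is the moment map of the diagonal Hamiltonian $\bfG$-action, the Poisson identity $\{\kappa_\alpha,\kappa_\beta\}=\kappa_{[\alpha,\beta]}$ for $\alpha,\beta\in\fg$ implies that the ideal $I\subset\cO_{T^*(\bfX\times\bfG/\bfB)}$ generated by the components $\kappa_\alpha$ is a Poisson ideal. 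Gabber's involutivity theorem then yields that every irreducible component of $V(I)=\kappa^{-1}(0)$ is coisotropic. This coisotropicity is the hard part: a direct tangent-space argument via $T_p\kappa^{-1}(0)=(\fg\cdot p)^\perp$ works cleanly only at smooth transversal points, whereas $\kappa^{-1}(0)$ may have irreducible components of excess dimension (corresponding to loci where the $\bfG$-action has positive-dimensional stabilizer), for which one genuinely needs the full strength of involutivity.
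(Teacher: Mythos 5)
Your proof is correct, but it departs from the paper's at both of its key steps, so it is worth comparing the two routes. For the dimension count, the paper never fibres $\kappa^{-1}(0)$ over the nilpotent orbits: it observes that $\dim\kappa^{-1}(0)-\dim\bfX-\dim(\bfG/\bfB)$ is, by the formula in \S\ref{subsec:mod}, the modality $m_{\bfG}(\bfX\times \bfG/\bfB)=m_{\bfB}(\bfX)$, which equals $c(\bfX)$ by Vinberg's theorem. Your orbit-by-orbit computation via Springer fibres instead produces $\max_{\bfO}(\dim\mu_{\bfG,\bfX}^{-1}(\bfO)-\dim\bfO/2)-\dim\bfX=c_{\cN}(\bfX)$, so the final step of your argument silently invokes the equality $c_{\cN}(\bfX)=c(\bfX)$, which is not a consequence of your computation but an external result (\cite[Proposition 2.2.8]{AG}, quoted in \S\ref{subsec:com}); this is legitimate but should be cited explicitly, and note that the paper deliberately proves $sc_{\cN}(\bfX)=c(\bfX)$ without it. For coisotropicity, the paper avoids Gabber entirely: $\kappa^{-1}(0)$ is the union of the conormal bundles to the diagonal $\bfG$-orbits in $\bfX\times\bfG/\bfB$, and such a union is coisotropic at the generic point of each of its irreducible components (over a Rosenlicht stratum the orbits form a fibration and the union of their conormals is the conormal variety of that foliation), which is exactly what Lemma \ref{lem:simp} requires. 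This elementary observation already handles the ``excess components'' that you flag as the reason for needing involutivity, since every component of $\kappa^{-1}(0)$ arises this way from some stratum. Your appeal to Gabber's theorem via the Poisson ideal generated by the components of $\kappa$ is valid and more robust --- it would apply verbatim to a Hamiltonian reduction not of cotangent-bundle type --- at the cost of invoking a much deeper theorem where a one-line geometric remark suffices.
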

\begin{proof}
We have
$$c(\bfX)=m_{\bf B}(\bfX)=m_{\bf G}(\bfX \times {\bf G/B})=\dim \kappa^{-1}(\{0\}) -\dim \bf X-\dim G/B$$
Since $\kappa^{-1}(0)$ is the union of conormal bundles to orbits, it is coisotropic. Thus Lemma \ref{lem:simp}  implies
$$\sdim \kappa^{-1}(0)=\dim \kappa^{-1}(0) -\dim T^*({\bf X\times G/B})/2 = \dim \kappa^{-1}(0) -\dim \bfX-\dim {\bf G/B}.$$
Since the image of $\nu$ lies in $\cN$, we have $\nu^{-1}(\cN)=T^*({\bf X\times G/P})$.
Altogether we have
$$sc_{\cN}(\bfX)=\sdim(\kappa^{-1}(0)\cap \nu^{-1}(\cN))=\sdim \kappa^{-1}(0)=\dim \kappa^{-1}(0) -\dim \bfX-\dim {\bf G/B}=c(\bfX).$$
\end{proof}



\begin{notn}\label{notn:Go}
For any nilpotent orbit $\bfO$ 
denote by $\Gamma_{\bfO}\subset T^*(\bfX)\times \bfO$  the image of ${\bf T}_{\bfO}$ under $$\Id \times \nu:T^*(\bfX)\times T^*({\bf G/B})  \to T^*(\bfX)\times \fg^*.$$ 
\end{notn}

Note that $\Gamma_{\bfO}$  is the intersection of  the graph of $-\mu_{\bf G, X}$ with the preimage of $\bfO$ under the projection $T^*(\bfX)\times \fg^*\onto \fg^*$. 

\begin{defn}\label{def:sympO}
Let $\bfO\sub  \fg^*$ be a coadjoint orbit, and let $a \in \bfO$. The Kirillov-Kostant-Souriau symplectic form on $\bfO$ is given at $a$ by $$\omega_{\bfO}(\ad^*(x)( a),\ad^*(y)(a))=\langle a, [x,y]\rangle.$$
Together with the standard symplectic form on $T^*\bfX$ this defines a symplectic form on $T^*\bfX\times \bfO$, and thus a 2-form on $\Gamma_{\bfO}$.
\end{defn}

\begin{lem}\label{lem:BGam}
For any nilpotent orbit $\bfO$ we have $\sdim(\Gamma_{\bfO})=sc_{\bfO}(\bfX)$.
\end{lem}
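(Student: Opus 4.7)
The plan is to apply Proposition \ref{prop:sympdom}\eqref{it:mapeq} to the map $\phi := \Id \times \nu : {\bf T}_\bfO \to \Gamma_\bfO$, which is surjective (and hence dominant) by the very definition of $\Gamma_\bfO$ as the image of ${\bf T}_\bfO$ under $\Id \times \nu$ in Notation \ref{notn:Go}. The proposition will then yield
\[
\sdim_{\phi^*(\omega_{T^*\bfX} + \omega_{\bfO})}({\bf T}_\bfO) \;=\; \sdim_{\omega_{T^*\bfX} + \omega_{\bfO}}(\Gamma_\bfO) \;=\; \sdim(\Gamma_\bfO).
\]
Since $sc_\bfO(\bfX)$ is by definition $\sdim({\bf T}_\bfO)$ with respect to the canonical symplectic form $\omega_{T^*\bfX} + \omega_{T^*(\bfG/\bfB)}$ on $T^*(\bfX \times \bfG/\bfB)$, the lemma will follow once we show that the pulled-back form $\phi^*(\omega_{T^*\bfX} + \omega_{\bfO})$ agrees with $\omega_{T^*\bfX} + \omega_{T^*(\bfG/\bfB)}$ on tangent vectors to ${\bf T}_\bfO$. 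As $\phi$ acts as the identity on the $T^*\bfX$ factor, this reduces to verifying the identity $\nu^*\omega_{\bfO} = \omega_{T^*(\bfG/\bfB)}$ on tangent vectors to $\nu^{-1}(\bfO) \subset T^*(\bfG/\bfB)$.

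To verify this identity at a point $p \in \nu^{-1}(\bfO)$ with $a := \nu(p)$, I would take tangent vectors $w_1, w_2 \in T_p \nu^{-1}(\bfO)$, choose $X_i \in \fg$ with $d\nu(w_i) = \ad^*(X_i)(a)$, and decompose $w_i = \chi_i + f_i$, where $\chi_i$ denotes the value at $p$ of the fundamental vector field of $X_i$ for the $\bfG$-action on $T^*(\bfG/\bfB)$, and $f_i := w_i - \chi_i$ lies in $\ker d\nu$ by equivariance of $\nu$, i.e.\ is tangent to the Springer fiber $\nu^{-1}(a)$. Expanding $\omega_{T^*(\bfG/\bfB)}(w_1, w_2)$ by bilinearity gives four terms. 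The $(\chi_1, \chi_2)$ term, by the defining moment-map relation $\omega(\chi(X),\cdot) = d\langle \nu, X\rangle$, equals $\langle \ad^*(X_2)(a), X_1\rangle = \langle a,[X_1,X_2]\rangle$, which by Definition \ref{def:sympO} coincides with $\nu^*\omega_{\bfO}(w_1, w_2)$. The two mixed terms $\omega(\chi_i, f_j)$ vanish by the same relation since $d\nu(f_j) = 0$. The remaining term $\omega(f_1, f_2)$ vanishes because Springer fibers are isotropic subvarieties of $T^*(\bfG/\bfB)$.

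The main obstacle is this last input, the isotropy of Springer fibers; this is a classical fact (e.g.\ a consequence of the Lagrangian character of the Steinberg variety $T^*(\bfG/\bfB) \times_{\cN} T^*(\bfG/\bfB)$ inside $T^*(\bfG/\bfB) \times T^*(\bfG/\bfB)$). Granting it, the four-term computation delivers the required identity $\nu^*\omega_{\bfO} = \omega_{T^*(\bfG/\bfB)}$ on $T\nu^{-1}(\bfO)$, and Proposition \ref{prop:sympdom}\eqref{it:mapeq} closes the argument.
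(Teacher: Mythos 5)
Your proof is correct and follows the same skeleton as the paper's: identify ${\bf T}_\bfO\to\Gamma_\bfO$ as a surjective (hence dominant) map, check that the pullback of the form on $\Gamma_\bfO$ is the restricted symplectic form on ${\bf T}_\bfO$, and conclude by Proposition \ref{prop:sympdom}\eqref{it:mapeq}. The only difference is that the paper outsources the form-compatibility step to \cite[Proposition 2.2.4(ii)]{AG}, whereas you prove it directly via the decomposition $w_i=\chi_i+f_i$, the moment-map identity, and the isotropy of Springer fibers; your computation is sound, and you correctly isolate the one genuinely nontrivial input (isotropy of the fibers of $\nu$, which does not hold for arbitrary moment maps --- e.g.\ it fails for $T^*\bfG\to\fg^*$ --- but is classical for $T^*({\bf G/B})\to\cN$ via the Lagrangian Steinberg variety). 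The only small imprecision is the identification of $\ker d\nu_p$ with the tangent space to the reduced Springer fiber; at the generic (smooth) points relevant for computing $\sdim$ this is harmless, but strictly speaking what you need is that $\ker d\nu_p$ itself is isotropic.
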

\begin{proof}
By \cite[Proposition 2.2.4(ii)]{AG}, the pullback under $\Id \times \nu$ of the form on $\Gamma_{\bfO}$ coincides with the form on $\mathbf{T}_\bfO$. Thus, by Proposition \ref{prop:sympdom}\eqref{it:mapeq}, we have $\mathbf{T}_\bfO=\sdim\Gamma_{\bfO}$.
\end{proof}

For a parabolic subgroup $\bf P\sub G$,  denote $\nu_P:=\mu_{\bf G,G/P}:T^*({\bf G/P})\to \overline{\bf O_P}\sub\cN(\fg^*)$ and 
$$\mu_{\bf P}:=\mu_{\bf \Delta G,X\times (G/P)}:T^*({\bf X\times G/P})\to \fg^*$$

Similar reasoning to the proof of Lemma \ref{lem:BGam} gives the following lemma. 

\begin{lem}\label{lem:PGam}
Let $\bf P\sub G$ be a parabolic subgroup, and $\bf O\sub \overline{\bf O_P}$ be a nilpotent orbit. Then $$\sdim(\Gamma_{\bfO})=\sdim(\mu_\bfP^{-1}(0)\cap \nu_{\bf P}^{-1}(\bfO))$$
\end{lem}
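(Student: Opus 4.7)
The plan is to mirror the proof of Lemma~\ref{lem:BGam} verbatim, replacing $\bf G/B$ by $\bf G/P$, and to use that $\bfO \subset \overline{\bf O_P}$ lies in the image of $\nu_{\bf P}$.

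First, I would consider the restriction of the map
$$\Id \times \nu_{\bf P}: T^*(\bfX) \times T^*({\bf G/P}) \to T^*(\bfX) \times \fg^*$$
to $\mu_\bfP^{-1}(0)\cap \nu_{\bf P}^{-1}(\bfO)$. A direct unpacking of definitions shows that a point $(x,\xi,y,\eta)$ lies in this subvariety if and only if $\mu_{\bf G,X}(x,\xi)+\nu_{\bf P}(y,\eta)=0$ and $\nu_{\bf P}(y,\eta)\in\bfO$, so its image $(x,\xi,\nu_{\bf P}(y,\eta))$ lies in the intersection of the graph of $-\mu_{\bf G,X}$ with the preimage of $\bfO$, which by the remark following Notation~\ref{notn:Go} is exactly $\Gamma_{\bfO}$. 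Since $\bfO\subset\overline{\bf O_P}=\nu_{\bf P}(T^*(\bf G/P))$, every point of $\Gamma_{\bfO}$ is hit, so the restriction is surjective onto $\Gamma_{\bfO}$, and in particular dominant.

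Next, I would verify that the pullback via $\Id\times \nu_{\bf P}$ of the 2-form on $\Gamma_{\bfO}$ (induced from the symplectic form on $T^*\bfX\times\bfO$ using the KKS form of Definition~\ref{def:sympO} on $\bfO$) coincides with the 2-form on $\mu_\bfP^{-1}(0)\cap\nu_{\bf P}^{-1}(\bfO)$ (induced from the canonical symplectic form on $T^*({\bf X\times G/P})$). This is the precise parabolic analog of \cite[Proposition 2.2.4(ii)]{AG}, whose proof carries over verbatim: the only input is the general symplectic-reduction identity asserting that for any Hamiltonian $\bfG$-action with moment map $\mu$, the pullback of the KKS form on a coadjoint orbit $\bfO$ along $\mu$ agrees on $\mu^{-1}(\bfO)$ with the ambient symplectic form. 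Applying this to $\nu_{\bf P}$ on $T^*(\bf G/P)$, and combining with the standard form on $T^*\bfX$, gives the compatibility.

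Finally, I would invoke Proposition~\ref{prop:sympdom}\eqref{it:mapeq} for this dominant morphism to conclude
$$\sdim(\mu_\bfP^{-1}(0)\cap \nu_{\bf P}^{-1}(\bfO))=\sdim(\Gamma_{\bfO}),$$
which is the desired equality. The only non-bookkeeping step is the compatibility of forms; but since the argument in \cite[Proposition 2.2.4(ii)]{AG} uses nothing specific to the Borel, this is not expected to pose any genuine obstacle beyond checking that the parabolic version of the cited statement is in fact formulated at the required level of generality.
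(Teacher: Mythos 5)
Your proposal coincides with the paper's proof, which consists entirely of the remark that ``similar reasoning to the proof of Lemma~\ref{lem:BGam}'' applies: one checks that $\Id\times\nu_{\bf P}$ maps $\mu_{\bfP}^{-1}(0)\cap\nu_{\bf P}^{-1}(\bfO)$ onto $\Gamma_{\bfO}$ (using the characterization of $\Gamma_{\bfO}$ as the graph of $-\mu_{\bf G,X}$ over $\bfO$ and the surjectivity of $\nu_{\bf P}$ onto $\overline{\bf O_P}\supset\bfO$), invokes the parabolic analogue of \cite[Proposition 2.2.4(ii)]{AG} for the compatibility of the $2$-forms, and concludes by Proposition~\ref{prop:sympdom}\eqref{it:mapeq}. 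One caveat: the ``general symplectic-reduction identity'' you appeal to is false for an arbitrary Hamiltonian action (the discrepancy between $\omega$ and $\mu^*\omega_{\bfO}$ on $\mu^{-1}(\bfO)$ is the restriction of $\omega$ to $\ker d\mu=(\fg\cdot m)^{\angle}$, which vanishes exactly when the relevant $G$-orbits are coisotropic), so the form-compatibility genuinely rests on the specific content of the cited proposition rather than on a soft general principle --- but since the paper relies on the very same citation, this does not separate your argument from theirs.
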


%







\begin{proof}[Proof of Theorem \ref{thm:main}]$\,$\\
\eqref{it:geq} By Corollary \ref{cor:orbitwice}, it is enough to prove for the case when $\Xi$ consists of a single orbit $\bfO$.  
 Since $\Gamma_{\bfO}$ is the graph of $-\mu_{\bf G,X}$ intersected with the preimage of $\bfO$ under the projection to $\fg^*$, we have $$\dim \Gamma_{\bfO}=\dim \mu^{-1}_{\bf G,X}(\bfO)= \dim \bfX+\dim \bfO/2+c_{\bfO}(\bfX)=\dim (T^*\bfX\times \bfO)/2+c_{\bfO}(\bfX).$$

 By  Lemmas \ref{lem:BGam} and \ref{lem:simp} we have 
$$sc_{\bf O}(\bfX)=\sdim\Gamma_{\bfO}\geq \dim \Gamma_{\bfO}-\dim (T^*\bfX\times \bfO)/2=c_{\bfO}(\bfX) $$

\eqref{it:max} By Corollary \ref{cor:orbitwice}, we can assume $l=1$ and thus $\Xi=\Xi_1$.  Let $ {\bf T}:={\bf T}_{\Xi}=\kappa^{-1}(0)\cap \nu^{-1}(\Xi)$  and ${\bf T}_i:={\bf T}_{{\bf Y}_i,\Xi}=\kappa_{\bfY_i}^{-1}(0)\cap \nu_{\bfY_i}^{-1}(\Xi)\subset T^*({\bf Y_i\times G/B})$. We have 
$$T^*\bfX \times T^*({\bf G/B})=\bigcup_i T^*\bfX|_{\bfY_i} \times T^*({\bf G/B})$$
Thus, by Corollary \ref{cor:max}, 
$$sc(\bfX)=\sdim {\bf T}=\max_i \sdim({\bf T}\cap T^*\bfX|_{\bfY_i} \times T^*({\bf G/B}))$$
We have a projection $p_i:T^*\bfX|_{\bfY_i} \to T^*\bfY_i$. It is easy to see that the pullback under $p_i$ of the symplectic form on $T^*\bfY_i$ is the restriction of the symplectic form on $T^*\bfX$. By definition, 
$(p_i\times \Id_{T^*({\bf G/B})})^{-1}({\bf T}_i)={\bf T}\cap T^*\bfX|_{\bfY_i} $ and thus $p_i\times \Id_{T^*({\bf G/B})}({\bf T}\cap T^*\bfX|_{\bfY_i} )={\bf T}_i$. By Proposition \ref{prop:sympdom}\eqref{it:mapeq}, this implies that $\sdim {\bf T}\cap T^*\bfX|_{\bfY_i} =\sdim {\bf T}_i$. Thus
$$sc_{\Xi}(\bfX)=\sdim {\bf T}=\max \sdim ({\bf T}\cap T^*\bfX|_{\bfY_i} )=\max \sdim {\bf T}_i=\max sc_{\Xi}(\bfY_i)$$

\eqref{it:mod} By Corollary \ref{cor:orbitwice}, and Lemmas \ref{lem:BGam} and  \ref{lem:PGam} we have 
$$sc_{\overline{\bf O_P}}(\bfX)=\max_{\bf O\ \sub \overline{O_P}}sc_{\bf O}(\bfX)=\max_{\bf O\ \sub \overline{O_P}} \sdim \Gamma_{\bf O}=\max_{\bf O\ \sub \overline{O_P}} \sdim (\mu_\bfP^{-1}(0)\cap \nu_{\bf P}^{-1}(\bfO))= \sdim (\mu_\bfP^{-1}(0))$$

\eqref{it:N} By Proposition \ref{prop:sd=c}
we have $sc_{\cN}(\bfX)=c(\bfX)$. By \cite[Proposition 2.2.8]{AG} we have $c_{\cN}(\bfX)=c(\bfX)$.
\end{proof}

\begin{proof}[Proof of Corollary \ref{cor:main}]
By Theorem \ref{thm:main}\eqref{it:mod}, \eqref{it:sc0} is equivalent to \eqref{it:Pfin}. By \cite[Theorem B]{AG}, \eqref{it:Pfin} is equivalent to \eqref{it:Ospher}.
\end{proof}
Theorem \ref{thm:main}\eqref{it:max} motivates us to consider the case when $\bf X$ is a homogeneous space, that is  $ \bf X= G/H$ for some algebraic subgroup $\bf H\sub G$.
 Let us give a formula for symplectic complexity in this case. 
Let $\fh$ denote the Lie algebra of $\bfH$, and let $\fh^{\bot}\sub \fg^*$ denote the space of functionals that vanish on $\fh$. 
\begin{prop}\label{prop:tran}
For any nilpotent orbit $\bfO\sub \cN(\fg^*)$ we have $sc_{\bfO}({\bf G/H})=\sdim(\bfO\cap \fh^{\bot})$.
\end{prop}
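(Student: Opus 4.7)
The plan is to combine Lemma \ref{lem:BGam}, which gives $sc_\bfO(\bfG/\bfH) = \sdim(\Gamma_\bfO)$, with an explicit slicing of $\Gamma_\bfO$ that isolates $\fh^{\bot} \cap \bfO$. First I would use the identification $T^*(\bfG/\bfH) \simeq \bfG \times^\bfH \fh^{\bot}$, under which $\mu := \mu_{\bfG,\bfG/\bfH}$ sends $[g,\xi] \mapsto \Ad^*(g)\xi$ and hence $\mu^{-1}(\bfO) \simeq \bfG \times^\bfH (\fh^{\bot} \cap \bfO)$. Composing with the graph isomorphism $\mu^{-1}(\bfO) \to \Gamma_\bfO$, $p \mapsto (p, -\mu(p))$, realizes $\Gamma_\bfO$ as this twisted product. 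Let $Y \subset \Gamma_\bfO$ denote the subvariety $\{([e,\eta],-\eta) : \eta \in \fh^{\bot} \cap \bfO\}$, which projects isomorphically onto $\fh^{\bot} \cap \bfO$. If $\fh^{\bot} \cap \bfO$ is empty, both sides of the desired equality vanish by convention; otherwise $Y$ is nonempty.

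Next, I would consider the $\bfG$-action map $\alpha: \bfG \times Y \to \Gamma_\bfO$, $(g,y) \mapsto g\cdot y$. It is surjective (any $([g,\xi],-\Ad^*(g)\xi) \in \Gamma_\bfO$ satisfies $\xi = \Ad^*(g^{-1})(\Ad^*(g)\xi) \in \fh^{\bot} \cap \bfO$, so it equals $g\cdot([e,\xi],-\xi)$), and in particular dominant. The crux is to show that $\alpha^*\omega_{\Gamma_\bfO}$ agrees with $\mathrm{pr}_Y^*\omega_Y$, where $\omega_Y$ is the form induced on $Y$ from the ambient symplectic form on $T^*\bfX \times \bfO$. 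Two ingredients suffice: (i) $\Gamma_\bfO$ lies in the zero fiber of the diagonal moment map $\mu_{T^*\bfX}+\iota_\bfO : T^*\bfX\times\bfO \to \fg^*$, so by the Hamiltonian identity the pairing between any $\bfG$-orbit direction $X\cdot p$ and any $w \in T_p\Gamma_\bfO$ vanishes, killing the cross terms between $\fg$-directions and $T_yY$-directions in $\alpha^*\omega$ at $(e,y)$, and by $\bfG$-invariance of the form at every point; (ii) at a point of $Y$, a tangent vector to $Y$ has vertical $T^*\bfX$-component (tangent to the fiber of $T^*\bfX \to \bfX$ over $e\bfH$), so the $\omega_{T^*\bfX}$-contribution vanishes and $\omega_Y$ coincides, up to sign, with the pullback of the KKS form from $\bfO$ to $\fh^{\bot} \cap \bfO$.

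Having established the form identity, two applications of Proposition \ref{prop:sympdom}\eqref{it:mapeq} -- once to the dominant map $\alpha$ and once to the dominant projection $\mathrm{pr}_Y : \bfG \times Y \to Y$ -- give
\[
\sdim\Gamma_\bfO \;=\; \sdim_{\alpha^*\omega_{\Gamma_\bfO}}(\bfG \times Y) \;=\; \sdim_{\mathrm{pr}_Y^*\omega_Y}(\bfG \times Y) \;=\; \sdim Y \;=\; \sdim(\fh^{\bot} \cap \bfO),
\]
which together with Lemma \ref{lem:BGam} yields the proposition. The main obstacle is the pullback-form identity in the middle paragraph: one must verify that all cross terms between $\fg$-directions and $T_yY$-directions vanish. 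This reduces to the moment-map identity on the coisotropic level set $(\mu_{T^*\bfX}+\iota_\bfO)^{-1}(0)$, in which $\Gamma_\bfO$ is contained; once this is done, the rest is formal.
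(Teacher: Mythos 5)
Your proposal is correct and follows the paper's own strategy almost step for step: both reduce via Lemma \ref{lem:BGam} to computing $\sdim\Gamma_{\bfO}$, take the fibre $\bfY=p^{-1}([1])\cong\bfO\cap\fh^{\bot}$ over the base point, and apply Proposition \ref{prop:sympdom}\eqref{it:mapeq} to the dominant action map ${\bf G\times Y}\to\Gamma_{\bfO}$. The one point where the arguments diverge is the crux, the vanishing of the cross terms $\omega(\fg\cdot y,T_y\bfY)$: the paper verifies this by an explicit computation with the cotangent and Kirillov--Kostant--Souriau forms, whereas you deduce it from the Hamiltonian identity $\omega(\alpha\cdot p,w)=\langle d_p\mu(w),\alpha\rangle$ on the zero level set of the diagonal moment map on $T^*\bfX\times\bfO$ (which contains $\Gamma_{\bfO}$) --- a cleaner argument that also gives the isotropy of $\fg\cdot y$ in the same stroke. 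Your explicit check that the form induced on $\bfY$ agrees up to sign with the KKS form on $\bfO\cap\fh^{\bot}$ (the $T^*\bfX$-component of $T_y\bfY$ being vertical, hence Lagrangian) is a detail the paper leaves implicit, and is worth spelling out as you do.
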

\begin{proof}
Let $\Gamma_{\bfO}\subset T^*({\bf G/H)\times \bfO}$ be as in Notation \ref{notn:Go}.
By Lemma \ref{lem:BGam}, we have $sc_{\bfO}({\bf G/H})=\sdim(\Gamma_{\bfO})$. Let $p:\Gamma_{\bfO}\onto {\bf G/H}$  be the natural projection, and let $\bfY:=p^{-1}([1])$. Then $\bfY$ is naturally isomorphic to $\bfO\cap \fh^{\bot}$. Let $a:{\bf G\times  Y\onto }{\Gamma}_{\bfO}$ denote the action map. Then by Proposition \ref{prop:sympdom}\eqref{it:mapeq}, $\sdim \Gamma_{\bfO}=\sdim_{a^*\omega}({\bf G\times  Y})$, where $\omega$   denotes the symplectic form on $\Gamma_{\bfO}$.

It is left to show that $\sdim_{a^*\omega}({\bf G\times  Y})=\sdim(\bfO\cap \fh^{\bot})$. For that purpose it is enough to show that for any smooth point $y\in \bfY$, $\sdim T_{(1,y)}{\bf G\times Y}=\sdim T_y\bfY$. For this  it is enough to show that $\fg$ lies in the radical of the  form $a^*\omega_{(1,y)}$, or equivalently, that $\fg \cdot y$ lies in the radical of $\omega$.
Since the action of $\bfG$ on $\Gamma_{\bfO}$ preserves $\omega$, the space $\fg \cdot y\sub T_y\Gamma_{\bfO}$ is isotropic.  On the other hand, $T_y\Gamma_{\bfO}=\fg\cdot y +T_y\bfY$. Thus it is enough to show that $\omega(\fg \cdot y,T_y\bfY)=0$. 

Let $\alp\in \fg$ and $\xi\in T_{y}\bfY$. Let $p_{\bfO}$ and $q$  be the projections from $\Gamma_{\bfO}$ to $\bfO$ and to $T^*({\bf G/H})$ respectively.
Let $\omega'$ and $\omega_{\bfO}$ denote the symplectic forms on $T^*({\bf G/H)}$ and $T^*\bfO$.  

Let us first show that $\omega'(\alp\cdot q(y),d_yq(\xi))= -\langle d_y p_{\bfO}(\xi), \alp\rangle $. Indeed, since $p(\bfY)=\{[1]\}$,
the vector $d_yq(\xi)$ is  ``vertical'', {\it i.e.} lies in $T_{q(y)} T_{[1]}^*{\bf(G/H)}\cong \fh^{\bot}\sub \fg^*$. From the definition of $\Gamma_{\bfO}$  we obtain that  $d_yq(\xi)=-d_yp_{\bfO}(\xi)$ as an element of $\fg^*$. From the definition of $\omega'$ (see {\it e.g.} \cite[Example 2.1.6]{AG}) we have $\omega'(\alp\cdot q(y),d_yq(\xi))= \langle d_yq(\xi), \alp\rangle $. Altogether we have 
$$\omega'(\alp\cdot q(y),d_yq(\xi))= \langle d_yq(\xi), \alp\rangle = -\langle d_yp_{\bfO}(\xi), \alp\rangle .$$

Let $\beta\in \fg$ be such that $\beta\cdot y=\xi$. Then we have 
\begin{align*}
&\omega(\alp\cdot y,\xi)= \omega'(d_yq(\alp\cdot y),d_yq(\xi))+
\omega_{\bfO}(d_yp_{\bfO}(\alp\cdot y),d_yp_{\bfO}(\xi))=\\
&\omega'(\alp\cdot q(y),d_yq(\xi))+
\omega_{\bfO}(\alp\cdot p_{\bfO}( y),\beta \cdot p_{\bfO}(y))= -\langle d_yp_{\bfO}(\xi), \alp\rangle +\langle p_{\bfO}(y), [\alp,\beta]\rangle=\\
&-\langle \beta\cdot p_{\bfO}(y), \alp\rangle +\langle p_{\bfO}(y), [\alp,\beta]\rangle
=\langle p_{\bfO}(y), [\beta,\alp]\rangle +\langle p_{\bfO}(y), [\alp,\beta]\rangle=0
\end{align*}
Thus $\omega(\fg \cdot y,T_y\bfY)=0$ and thus $\sdim_{a^*\omega}({\bf G\times  Y})=\sdim(\bfO\cap \fh^{\bot})$.
Summarizing we have
$$ sc_{\bfO}({\bf G/H})=\sdim(\Gamma_{\bfO})=\sdim_{a^*\omega}({\bf G\times  Y})=\sdim(\bfO\cap \fh^{\bot})$$
\end{proof}

%

\appendix

\section{Nash blowing up and the proof of Proposition \ref{prop:CG}}\label{sec:CG}

Let $M$ be a smooth variety.

\begin{notn}
For any vector space $V$ we denote by $Gr(V)$ the variety of all vector subspaces (of all dimensions) of $V$. This is a disconnected union of all Grassmanians on $V$. 

For any vector bundle $E$ on $M$ let $Gr(E)$ denote the Grassmanian bundle
$$Gr(E):=\{(x,L)\, \vert \,L\sub E_x \text{ a linear subspace}\}.$$

\end{notn}
\Dima{
\begin{defn}[Nash blowing up]
Let $Z\sub M$ be a locally closed subvariety, and $U\sub Z$ be an open subvariety of the smooth locus of $Z$. Let $p:Gr(TM)\to M$ denote the projection. 
Define the \emph{Nash blow up} of $Z$ (inside $M$) by
$$\tau_{U}(Z,M):=\overline{\{(x,T_xU)\, \vert \, x\in U\}}\cap p^{-1}(Z)\sub Gr(TM).$$
\end{defn}
}
\begin{remark}$\,$
\begin{enumerate}[(i)]
\item It is easy to see the variety $\tau_{U}(Z,M)$ does not depend on the choice of $U$. We will thus denote it by $\tau(Z,M)$.
\item One can show that  $\tau_{U}(Z,M)$ does not depend on the ambient manifold $M$ up to a canonical isomorphism, see \cite{Nob}.
\end{enumerate}
\end{remark}

The proof of Proposition \ref{prop:CG} is based on the following theorem.

\begin{thm}\label{thm:CG}
Let $Z_1\sub Z\sub M$ be \Dima{locally} closed subvarieties, and let $z\in Z_1$ be a smooth point. Then there exists $L$ such that $(z,L)\in  \tau(Z,M)$ and $T_zZ_1\sub L$.
\end{thm}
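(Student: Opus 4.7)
My plan is to invoke the classical theory of Whitney stratifications. By Whitney's (or Mather's, or Hironaka's) existence theorem, the complex algebraic variety $Z$ admits a Whitney stratification $Z = \bigsqcup_\alpha S_\alpha$ by smooth locally closed subvarieties, and this stratification can be refined so that the closed subvariety $Z_1$ is a union of strata. Since $z$ is a smooth point of $Z_1$, the stratum $S_0 \ni z$ is open in $Z_1$ in a neighbourhood of $z$; in particular $T_z S_0 = T_z Z_1$.

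Next I would pick a top-dimensional stratum $S_* \subset Z^{sm}$ with $z \in \overline{S_*}$. Such an $S_*$ exists: any irreducible component of $Z$ through $z$ contains an open dense smooth stratum, whose closure is that component and hence contains $z$. Since $S_0$ is connected and meets $\overline{S_*}$, the stratification axioms force $S_0 \subset \overline{S_*}$. Choose any sequence $z_n \in S_*$ with $z_n \to z$. Because $z_n \in Z^{sm}$, the tangent spaces are well-defined and $(z_n, T_{z_n}Z) \in \tau(Z,M)$. Using properness of the Grassmannian bundle $Gr(TM) \to M$ over a neighbourhood of $z$, after passing to a subsequence $T_{z_n}Z$ converges in $Gr(TM)$ to some subspace $L \subset T_z M$ of dimension $\dim Z$, and $(z,L) \in \tau(Z,M)$ because $\tau(Z,M)$ is closed.

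The final step is Whitney's regularity condition (a), which is guaranteed by our choice of stratification: whenever $S_0 \subset \overline{S_*}$, $z \in S_0$, $z_n \in S_*$ with $z_n \to z$ and $T_{z_n} S_* \to L$ in the Grassmannian, one has $T_z S_0 \subset L$. Applied to our situation this yields $T_z Z_1 = T_z S_0 \subset L$, as required.

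The main technical obstacle is the appeal to Whitney stratification theory --- both the existence of a Whitney stratification compatible with $Z_1$ and the verification of condition (a) for it. Both are classical but substantial results; once they are available, the remainder of the argument is a direct continuity/closure computation in the Grassmannian bundle. A plausible alternative, closer to the techniques of Bierstone, Kerner and Temkin acknowledged in the paper, would replace this step by an explicit resolution-of-singularities argument producing a smooth $\tilde Z \to Z$ from which the requisite limiting tangent plane $L$ is extracted by taking a curve in $\tilde Z$ tangent to the preimage of $Z_1$ and tracking the resulting Taylor coefficients of the image.
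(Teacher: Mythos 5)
There is a genuine gap at the key step where you claim that, for a Whitney stratification of $Z$ compatible with $Z_1$, the stratum $S_0$ containing the given smooth point $z$ of $Z_1$ is open in $Z_1$ near $z$, so that $T_zS_0=T_zZ_1$. This is false in general: the stratification is forced to refine $Z_1$ not only along its own singular locus but also at points where $Z$ is badly behaved along $Z_1$. For example, take $Z=\{x^2=zy^2\}\subset\mathbb{A}^3$ (the Whitney umbrella) and $Z_1$ the $z$-axis, which is smooth and equals $\mathrm{Sing}(Z)$. The Whitney conditions for the pair (two-dimensional part, $z$-axis) fail at the origin, so every Whitney stratification of $Z$ has $\{0\}$ as a stratum; at $z=0$ your application of condition (a) yields only $T_0S_0=0\subset L$, which is vacuous. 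The theorem nevertheless holds there: the limit of tangent planes along the curve $(t^3,t^2,t^2)$ is the $yz$-plane, which does contain the $z$-axis --- but condition (a) applied to the point stratum cannot detect this witness, and indeed other limiting planes (e.g.\ the $xy$-plane) do not contain $T_0Z_1$. So your argument proves the statement only for $z$ in the dense open subset of the smooth locus of $Z_1$ consisting of points whose stratum is open in $Z_1$.

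What is missing is a mechanism for propagating the conclusion from a dense open subset $U\subset Z_1$ to all smooth points of $Z_1$. This is exactly the role of Lemma \ref{lem:extL} and Lemma \ref{lem:new}\eqref{it:U} in the paper: one forms the incidence variety
$$C=\{(z,L,L')\in Gr(TM)\times_M Gr(TM)\ \vert\ (z,L)\in\tau(Z_1,M),\ (z,L')\in\tau(Z,M),\ L\subset L'\},$$
notes that its projection to $\tau(Z_1,M)$ is closed because the Grassmannian bundle is proper, and observes that this projection contains the dense subset $\{(u,T_uU)\}$ coming from $U$; hence it contains $(z,T_zZ_1)$ for \emph{every} smooth point $z$ of $Z_1$, which is the desired conclusion. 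Apart from this omission, your use of Whitney stratifications (existence, the frontier condition, condition (a), and compactness of the Grassmannian to extract the limit $L$) is essentially the same route the paper takes on the generic stratum; adding the properness step above, after first reducing to $Z_1$ irreducible as in Lemma \ref{lem:irr}, would complete the argument.
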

For the proof we will need the following ad-hoc definition and lemmas.

\begin{defn}
We call a triple $Z_1\sub Z\sub M$ of algebraic varieties as in Theorem \ref{thm:CG} \emph{good} if it satisfies the conclusion of the theorem. \end{defn}
\begin{lem}\label{lem:extL}
If $Z_1\sub Z\sub M$ is a good triple, then for any $(z,L)\in \tau(Z_1,M)$ there exists $L'$ such that $(z,L')\in \tau(Z,M)$ and $L\sub L'$. 
\end{lem}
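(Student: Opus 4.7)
The plan is to realize the conclusion of the lemma as the surjectivity of a proper morphism from a suitable incidence variety. I would introduce
\[
W := \bigl\{ (x, L, L') \in \tau(Z_1, M) \times_M \tau(Z, M) \, : \, L \subset L' \bigr\},
\]
equipped with its two projections $\pi_1 : W \to \tau(Z_1, M)$ and $\pi_2 : W \to \tau(Z, M)$. The statement of the lemma is precisely that $\pi_1$ is surjective, so the whole task reduces to establishing surjectivity of $\pi_1$.

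The first step is to observe that $W$ is a closed subvariety of $\tau(Z_1, M) \times_M \tau(Z, M)$: the flag condition $L \subset L'$ cuts out a closed subvariety of $Gr(TM) \times_M Gr(TM)$ (handled separately on each pair of Grassmannian components indexed by dimension), while the two Nash blowups are closed in $p^{-1}(Z_1)$ and $p^{-1}(Z)$ by construction. Since $Gr(TM) \to M$ is projective, the map $\tau(Z, M) \to Z$ is proper, so by base change along $\tau(Z_1, M) \to Z_1 \subset Z$ the projection $\pi_1$ becomes proper; in particular the image of $\pi_1$ is closed in $\tau(Z_1, M)$.

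The second step is to show that the image of $\pi_1$ is dense, so that combined with closedness it must be all of $\tau(Z_1, M)$. Let $U_1 \subset Z_1$ be the open subset used in the definition of $\tau(Z_1, M)$; by construction the graph $\{(x, T_x U_1) : x \in U_1\}$ is dense in $\tau(Z_1, M)$. For every $x \in U_1$ the point $x$ is smooth on $Z_1$, so the goodness hypothesis applied to the triple $Z_1 \subset Z \subset M$ supplies an $L'_x$ with $(x, L'_x) \in \tau(Z, M)$ and $T_x U_1 = T_x Z_1 \subset L'_x$. The triple $(x, T_x U_1, L'_x)$ therefore lies in $W$ and projects to $(x, T_x U_1)$, exhibiting the dense set as contained in the image of $\pi_1$.

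The main (mild) potential obstacle is purely technical: one has to check that the incidence relation $L \subset L'$ is closed in the fiber product of Grassmannian bundles, and that the properness of $\tau(Z, M) \to Z$ is inherited by $\pi_1$ through base change. Both are standard once the Grassmannian bundle structure is unwound, and need no new input beyond the projectivity of $Gr(TM) \to M$, so I do not anticipate any genuine difficulty beyond careful bookkeeping.
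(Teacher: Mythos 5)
Your proof is correct and follows essentially the same route as the paper: both construct the closed incidence variety of flags $L\sub L'$ inside the fiber product of Grassmannian bundles, use properness of the projection to conclude the image is closed, and then use the goodness hypothesis at smooth points of $Z_1$ to show the image contains the dense graph defining $\tau(Z_1,M)$. No issues.
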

\begin{proof}
Consider the closed subvariety 
$$C:=\{(z,L,L')\in Gr(M)\times_MGr(M)\, \vert \, \, (z,L)\in \tau(Z_1,M), (z,L')\in \tau(Z,M) \text{ and } L\sub L'\} $$
Denote by $p:Gr(M)\times_MGr(M) \to Gr(M)$ the projection on the first coordinate. We have to show that $p(C)\supset \tau(Z_1,M)$. The image $p(C)$ is closed since $C$ is closed and $p$ is proper. Thus it is enough to show that $p(C)$ includes a dense subset of $\tau(Z_1,M)$. Let $Z_1^0$ be the smooth locus of $Z_1$. The assumption that the triple $Z_1\sub Z\sub M$ is good implies that $p(C)$ includes $\tau(Z_1^0,M)$.\end{proof}

\begin{cor}\label{cor:4ple}
Let $Z_2\sub Z_1\sub Z\sub M$ be closed subvarieties. Suppose that the triples $Z_1\sub Z \sub M$ and $Z_2\sub Z_1\sub M$ are good. Then the triple $Z_2\sub Z \sub M$ is good. 
\end{cor}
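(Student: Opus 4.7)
The plan is to chain the two goodness assumptions together via Lemma \ref{lem:extL}. Let $z \in Z_2$ be a smooth point. Applying the goodness of the triple $Z_2 \subset Z_1 \subset M$, I obtain some subspace $L_0 \subset T_zM$ with $(z,L_0) \in \tau(Z_1,M)$ and $T_zZ_2 \subset L_0$. Now I want to promote this to a point of $\tau(Z,M)$ above $z$ that still contains $T_zZ_2$.

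This is exactly what Lemma \ref{lem:extL} provides, applied to the (good, by hypothesis) triple $Z_1 \subset Z \subset M$: starting from the point $(z,L_0) \in \tau(Z_1,M)$, the lemma produces $L \subset T_zM$ with $(z,L) \in \tau(Z,M)$ and $L_0 \subset L$. Combining, we get $T_zZ_2 \subset L_0 \subset L$, which is precisely the conclusion defining goodness of the triple $Z_2 \subset Z \subset M$.

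There is essentially no obstacle here: the corollary is really just a transitivity statement and the substance of the argument is already packaged in Lemma \ref{lem:extL}. The only thing to double-check is that the hypothesis of Lemma \ref{lem:extL} is met, namely that the triple $Z_1 \subset Z \subset M$ is good (given) and $(z,L_0)$ indeed lies in $\tau(Z_1,M)$ (which is what the first application of goodness gave us). Hence the proof is a single short paragraph.
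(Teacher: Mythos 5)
Your proof is correct and is essentially identical to the paper's own argument: both take a smooth point $z\in Z_2$, use goodness of $Z_2\sub Z_1\sub M$ to produce $(z,L_0)\in\tau(Z_1,M)$ containing $T_zZ_2$, and then apply Lemma \ref{lem:extL} to the good triple $Z_1\sub Z\sub M$ to enlarge $L_0$ to some $L$ with $(z,L)\in\tau(Z,M)$. No issues.
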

\begin{proof}
Let $z\in Z_2$ be a smooth point. By the assumption that the triple $Z_2\sub Z_1\sub M$ is good, there exists $L$ such that $(z,L)\in \tau(Z_1,M)$ and $T_zZ_2\sub L$. By the assumption that the triple $Z_1\sub Z \sub M$ is good, Lemma \ref{lem:extL} implies that there exists $L'$ such that $(z,L')\in \tau(Z,M)$ and $L\sub L'$. Thus $T_zZ_2\sub L'$ as required.
\end{proof}

\begin{lem}\label{lem:irr} Let $Z_1\sub Z\sub M$ be closed subvarieties.  
If for any irreducible component $Z_1'\sub Z'$, the triple $Z'_1\sub Z\sub M$ is good then so is the triple $Z_1\sub Z\sub M$.
\end{lem}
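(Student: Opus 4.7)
The plan is to exploit the fact that a smooth point of a reducible variety automatically lies on a unique irreducible component, and that the tangent space at such a point agrees with the tangent space of that component. This will reduce the statement about $Z_1$ to the hypothesis on its components, essentially tautologically.

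Concretely, I would argue as follows. Let $z \in Z_1$ be a smooth point, in the sense required by the definition of goodness. Since the smooth locus of $Z_1$ is an open subset which cannot meet two distinct irreducible components (otherwise $Z_1$ would be non-separated analytically near $z$, contradicting smoothness), there is a unique irreducible component $Z_1' \sub Z_1$ containing $z$, and there is an open neighborhood of $z$ in $Z_1$ which is contained in $Z_1'$. In particular $z$ is a smooth point of $Z_1'$ as well, and $T_z Z_1 = T_z Z_1'$.

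Now the triple $Z_1' \sub Z \sub M$ is good by hypothesis, so there exists $L$ with $(z, L) \in \tau(Z, M)$ and $T_z Z_1' \sub L$. Combining with the previous paragraph, $T_z Z_1 \sub L$, which verifies goodness of $Z_1 \sub Z \sub M$ at $z$. Since the smooth point $z$ was arbitrary, this completes the proof.

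I do not expect any serious obstacle: the content is essentially the local irreducibility of a smooth point, and no further use of Nash blow-ups beyond pulling back the data from the appropriate component is required. The only subtlety worth flagging explicitly is the assertion that the smooth locus of $Z_1$ is disjoint from the pairwise intersections of irreducible components; this is standard (smoothness is a local analytic condition and a reducible point is always singular), but deserves a one-line justification in the write-up.
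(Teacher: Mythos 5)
Your argument is correct and is essentially the paper's own proof: the paper likewise reduces to the observation that the regular locus of $Z_1$ is contained in the union of the regular loci of its irreducible components, which is exactly your point that a smooth point $z$ lies on a unique component $Z_1'$ with $T_zZ_1 = T_zZ_1'$. Your write-up just makes explicit the tangent-space identification that the paper leaves implicit.
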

\begin{proof}$\,$
This follows from the fact that the regular locus of $Z_1$ lies inside the union of the regular loci of $Z_1'$, where $Z_1'$ ranges over all irreducible components of $Z_1$. 
\end{proof}
%
%

\Dima{

The following lemma is obvious.
\begin{lemma}\label{lem:new}
Let $Z_1\sub Z\sub M$ be as in Theorem \ref{thm:CG}. Suppose that one of the following conditions holds.
\begin{enumerate}[(i)]
\item $Z$ is smooth.\label{it:Zsm}
\item There exists an open dense subset $U\sub Z_1$ such that the triple $U\sub Z\sub M$ is good.\label{it:U}
\end{enumerate}
Then the triple $Z_1\sub Z\sub M$ is good.
\end{lemma}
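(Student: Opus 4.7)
The plan is to dispatch the two cases separately. For case \eqref{it:Zsm}, I would observe that when $Z$ is smooth one may take $U = Z$ in the definition of the Nash blow-up, so that $\tau(Z, M) = \{(x, T_x Z) : x \in Z\}$ (the tangent section is already closed in $p^{-1}(Z)$, and intersecting its closure in $Gr(TM)$ with $p^{-1}(Z)$ recovers it). For any smooth point $z \in Z_1$, the inclusion $Z_1 \subset Z$ yields $T_z Z_1 \subset T_z Z$, so $L := T_z Z$ witnesses goodness.

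For case \eqref{it:U}, I would mimic the proof of Lemma \ref{lem:extL} with $Z_1$ and $U$ playing parallel roles. Form the closed subvariety
$$C := \{(x, L, L') \in Gr(TM) \times_M Gr(TM) : (x, L) \in \tau(Z_1, M),\ (x, L') \in \tau(Z, M),\ L \subset L'\},$$
and let $q$ be its projection onto $\tau(Z_1, M)$. Since $Gr(TM) \to M$ is proper, so is $q$, hence $q(C)$ is closed in $\tau(Z_1, M)$. The goodness hypothesis on $U \subset Z \subset M$ guarantees that for every smooth point $u \in U$ there is an $L'$ with $(u, L') \in \tau(Z, M)$ and $T_u U = T_u Z_1 \subset L'$; consequently $q(C)$ contains every pair $(u, T_u Z_1)$ with $u$ a smooth point of $Z_1$ lying in $U$.

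Finally, I would invoke density: because $U$ is open dense in $Z_1$, the common open subset $V := U \cap Z_1^{\mathrm{sm}}$ is smooth and dense in both $Z_1$ and $U$, and can be used to compute either Nash blow-up. The locus $\{(x, T_x V) : x \in V\}$ is dense in $\tau(Z_1, M)$ by definition, and it lies in $q(C)$. Hence the closed set $q(C)$ must equal all of $\tau(Z_1, M)$, and feeding in $(z, T_z Z_1)$ for an arbitrary smooth $z \in Z_1$ produces the required $L'$. The only mildly delicate step is this density assertion for $\tau(U, M) \subset \tau(Z_1, M)$, which rests on the independence of the Nash blow-up from the choice of smooth dense open.
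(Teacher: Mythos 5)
Your proof is correct. The paper actually states this lemma without proof (``The following lemma is obvious''), so there is no argument to compare against; your case \eqref{it:Zsm} is the intended triviality (for smooth $Z$ the Nash blow-up is just the tangent section, and $T_zZ_1\sub T_zZ$), and your case \eqref{it:U} correctly recycles the properness-plus-density mechanism of Lemma \ref{lem:extL}, using that $\{(x,T_xV):x\in V\}$ with $V=U\cap Z_1^{\mathrm{sm}}$ is dense in $\tau(Z_1,M)$ by the independence of the Nash blow-up from the choice of smooth dense open subset.
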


\begin{proof}[Proof of Theorem \ref{thm:CG}]
By Lemma \ref{lem:irr} we can assume without loss of generality that $Z_1$ is irreducible. By \cite{Whit1,Whit2}, $Z$ has a Whitney stratification $Z=\bigcup S_i$. Let $i_0$ be such that $S_{i_0}\cap Z_1$ is open \DimaA{and dense }in $Z_1$. By the definition of Whitney stratification the triple $S_{i_0}\sub Z \sub M$ is good. By Lemma \ref{lem:new}\eqref{it:Zsm}, the triple $S_{i_0}\cap Z_1\sub S_{i_0}\sub M$ is good. Thus, by Corollary \ref{cor:4ple},   the triple $S_{i_0}\cap Z_1\sub Z \sub M$ is good. Thus, by Lemma \ref{lem:new}\eqref{it:U}, the triple $Z_1\sub Z \sub M$ is good.
\end{proof}}
\subsection{Deduction of Proposition \ref{prop:CG}}

The following lemma is straightforward.
\begin{lemma}\label{lem:Linsdim}
Let $W$ be a linear space, and let $\Omega^2(W)$ denote the linear space of anti-symmetric bilinear forms on $W$. Then the function $\nu:Gr(W)\times \Omega^2(W)\to \bN$ given by $\nu(\omega,L):=\sdim(L,\omega|_L)$ is
\begin{enumerate}[(i)]
\item lower semicontinuous
\item \Dima{weakly} increasing with respect to inclusion  in $Gr(W)$.
\end{enumerate}
\end{lemma}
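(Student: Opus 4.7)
The plan is to identify $\nu(\omega,L)$ with $\tfrac12\operatorname{rank}(\omega|_L)$, since by definition $2\nu(\omega,L)=\dim L-\dim\Ker(\omega|_L)=\operatorname{rank}(\omega|_L)$, and then to deduce both claims from standard properties of the rank of a bilinear form.

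For (ii), I would dispatch the monotonicity first. Fix $\omega\in\Omega^2(W)$ and an inclusion $L_1\subset L_2$, and let $T_i\colon L_i\to L_i^*$ denote the map $v\mapsto\omega(v,\cdot)|_{L_i}$. Then $T_1$ is the composition $L_1\hookrightarrow L_2\xrightarrow{T_2}L_2^*\twoheadrightarrow L_1^*$, so $\operatorname{rank}(T_1)\leq\operatorname{rank}(T_2)$; dividing by two yields $\nu(\omega,L_1)\leq\nu(\omega,L_2)$.

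For (i), I would write $Gr(W)=\bigsqcup_d Gr_d(W)$ and check lower semicontinuity on each piece $Gr_d(W)\times\Omega^2(W)$ separately. Over a standard affine chart of $Gr_d(W)$ (the open complement of a fixed codimension-$d$ plane), pick an algebraic frame $e_1(L),\dots,e_d(L)$ of the tautological subbundle. Then $M(L,\omega)_{ij}:=\omega(e_i(L),e_j(L))$ is a skew-symmetric matrix depending polynomially on $(L,\omega)$, and $2\nu(\omega,L)=\operatorname{rank} M(L,\omega)$. Since the locus $\{\operatorname{rank} M\geq r\}$ is the non-vanishing set of some $r\times r$ minor, it is open, yielding lower semicontinuity.

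There is no substantial obstacle in the argument: (ii) is the subadditivity of rank under composition of linear maps, and (i) is the classical lower semicontinuity of matrix rank under continuous variation of entries. The only bookkeeping point is treating $Gr(W)$ as the disjoint union of its components $Gr_d(W)$ so that the local trivialization used in (i) is well-defined on each piece.
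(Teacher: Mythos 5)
Your proof is correct, and since the paper simply declares this lemma ``straightforward'' and gives no argument, your write-up supplies exactly the intended reasoning: the identification $2\nu(\omega,L)=\operatorname{rank}(\omega|_L)$, monotonicity of rank under the factorization $L_1\hookrightarrow L_2\to L_2^*\twoheadrightarrow L_1^*$, and lower semicontinuity of matrix rank in local frames on each component $Gr_d(W)$. The only cosmetic point is that $\{\operatorname{rank}M\geq r\}$ is the \emph{union} of the non-vanishing loci of all $r\times r$ minors (still open), not of a single one.
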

\Dima{
The following lemma well known.
\begin{lem}\label{lem:ExtForm}
Let $X\subset \mathbb{A}^n$ be a closed subvariety. Then any $k$-form on $X$ is the restriction of a $k$-form on  $\mathbb{A}^n$. \end{lem}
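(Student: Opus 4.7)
The plan is to exploit the fact that the module of differential forms on $\mathbb{A}^n$ is globally free, and that Kähler differentials are functorial in closed embeddings.

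First I would record the standard fact that $\Omega^1_{\mathbb{A}^n}$ is the free $\OO(\mathbb{A}^n)$-module with basis $dx_1,\ldots,dx_n$, so $\Omega^k_{\mathbb{A}^n}$ is the free $\OO(\mathbb{A}^n)$-module with basis $\{dx_{i_1}\wedge\cdots\wedge dx_{i_k}\}_{i_1<\cdots<i_k}$. In particular, every global $k$-form on $\mathbb{A}^n$ has the shape $\sum_I F_I\, dx_{i_1}\wedge\cdots\wedge dx_{i_k}$ with $F_I\in \C[x_1,\ldots,x_n]$.

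Next I would show that the closed embedding $\iota:X\hookrightarrow \mathbb{A}^n$ induces a surjection of $\OO(X)$-modules
\[
\iota^{*}\Omega^{1}_{\mathbb{A}^n}\;=\;\Omega^{1}_{\mathbb{A}^n}\otimes_{\OO(\mathbb{A}^n)}\OO(X)\;\twoheadrightarrow\;\Omega^{1}_{X}.
\]
This is immediate from the universal property of Kähler differentials: for any $\bar f\in\OO(X)$, choose a lift $F\in\C[x_1,\ldots,x_n]$ of $\bar f$, so that $d\bar f=\sum_i \overline{\partial F/\partial x_i}\cdot d\bar x_i$ in $\Omega^{1}_{X}$. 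Taking $k$-th exterior powers preserves surjectivity, giving
\[
\iota^{*}\Omega^{k}_{\mathbb{A}^n}\;\twoheadrightarrow\;\Omega^{k}_{X}.
\]

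Finally, given a $k$-form $\omega\in\Omega^{k}_{X}$, I would write it (using the previous surjection and the fact that the source is free on $dx_{i_1}\wedge\cdots\wedge dx_{i_k}$) as $\omega=\sum_I \bar f_I\, d\bar x_{i_1}\wedge\cdots\wedge d\bar x_{i_k}$ with $\bar f_I\in\OO(X)$. Lifting each $\bar f_I$ to a polynomial $F_I\in \C[x_1,\ldots,x_n]$ (possible because $\OO(\mathbb{A}^n)\twoheadrightarrow\OO(X)$ is surjective), the form $\tilde\omega:=\sum_I F_I\, dx_{i_1}\wedge\cdots\wedge dx_{i_k}\in\Omega^{k}_{\mathbb{A}^n}$ satisfies $\iota^{*}\tilde\omega=\omega$ by construction.

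There is no real obstacle here; the proof is a direct consequence of the freeness of $\Omega^{\bullet}_{\mathbb{A}^n}$ and the surjectivity of $\C[x_1,\ldots,x_n]\twoheadrightarrow\OO(X)$. The only point worth stating explicitly is the identification of restriction of forms with the functoriality map on Kähler differentials, which then reduces everything to lifting coefficients.
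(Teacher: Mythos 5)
Your proof is correct and follows essentially the same route as the paper: the paper reduces to $k=1$ and invokes the ``second exact sequence'' for the surjectivity of $\Omega^1_{\mathbb{A}^n}\otimes_{\mathcal{O}(\mathbb{A}^n)}\mathcal{O}(X)\to\Omega^1_X$, which is exactly the surjection you establish before passing to exterior powers and lifting coefficients. The only difference is that you verify this surjectivity directly from the universal property of K\"ahler differentials rather than citing Hartshorne/Matsumura, which is a harmless (and self-contained) substitution.
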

\begin{proof}
It is enough to prove for the case $k=1$, as a $k$-form is the wedge of 1-forms.
This case follows from the ``second exact sequence'', see {\it e.g.} \cite[\S Proposition II.8.4A]{Har} or \cite[Theorem 58]{Mat}.
\end{proof}
}
\begin{proof}[Proof of Proposition \ref{prop:CG}]
The statement is local, thus we can assume that $\bf Z$ is affine and embedded as a closed subvariety into an affine space $\bf M$. By Lemma \ref{lem:ExtForm} we can extend the form $\omega$ to $\bf M$. Identify the tangent space to $\bf M$ at all points and call it $W$. The form  $\omega$ on $M$ gives a map $M\to \Omega^2(W)$. 

Let $z\in {\bf Y}$ be a smooth point.  
By Theorem \ref{thm:CG} there exists $L$ such \Dima{that $(z,L)\in  \tau(Z,M)$ and $T_z\bfY\sub L$. By Lemma \ref{lem:Linsdim}(ii) we have 
 $\sdim T_z \bfY\leq \sdim L$. By Lemma \ref{lem:Linsdim}(i) we have 
 $\sdim L\leq \sdim {\bf Z}$.  }
\end{proof}

\end{document}